\documentclass[12pt,twoside]{amsart}
\usepackage{amssymb,amsmath,amscd,enumerate,verbatim,xcolor,mathtools,fullpage}
\usepackage{enumitem}

\usepackage[top=3.5cm, bottom=2.5cm, left=3.0cm, right=3cm]{geometry}
\usepackage{color,colortbl, fancyhdr, graphicx, wrapfig}
\usepackage{multirow}
\usepackage{diagbox}
\usepackage[unicode]{hyperref}
\usepackage{indentfirst}
\usepackage{tikz}
\usepackage{array}
\usetikzlibrary{calc}
\usepackage{mathrsfs}
\usepackage{graphicx}
\usepackage{longtable}
\usepackage{cleveref}

\newtheorem{thm}{Theorem}[section]
\newtheorem{theorem}[thm]{Theorem}
\newtheorem{lemma}[thm]{Lemma}
\newtheorem{proposition}[thm]{Proposition}
\newtheorem{corollary}[thm]{Corollary}
\theoremstyle{definition}
\newtheorem{definition}[thm]{Definition}
\newtheorem{remark}[thm]{Remark}
\newtheorem{example}[thm]{Example}

\newtheorem{question}[thm]{Question}

\usepackage{hyperref}
\hypersetup{
	colorlinks=true,       
	linkcolor=red,          
	citecolor=blue,        
	filecolor=magenta,      
	urlcolor=cyan           
}

\newcommand{\avc}{A_{\operatorname{c}}}

\newcommand{\Cat}{\operatorname{Cat}}

\DeclareMathOperator{\Span}{span}
\DeclareMathOperator{\im}{Im}
\DeclareMathOperator{\Ker}{Ker}

\begin{document}

\title{Weak and strong Lefschetz properties for vertex cover Artinian algebras associated to graphs}
\author[T. Chau]{Trung Chau}
\address{Department of Mathematics, University of Manitoba, 420 Machray Hall, 186 Dysart  Road, Winnipeg, MB R3T 2N2 Canada}
\email{chauchitrung1996@gmail.com}

\author[T.Q. Hoa]{Tran Quang Hoa}
\address{University of Education, Hue University,  34 Le Loi St., Hue City, Vietnam.}
\email{tranquanghoa@hueuni.edu.vn}

\author[D.T.K. Uyen]{Dang Thi Kieu Uyen}
\address{University of Education, Hue University, 34 Le Loi St., Hue City, Vietnam.}
\email{dtkuyen@dhsphue.edu.vn}
\date{}

\keywords{Artinian algebras, cover ideals,
independence polynomials, weak Lefschetz property, vertex cover, Ferrers graph, paths, cycles, Erd\H{o}s-R\'enyi random model.}

\subjclass[2020]{13E10; 13F20; 13F55; 05C31; 05E40}

\begin{abstract}
Let $G$ be a finite simple graph and let $\avc(G)$ be the Artinian algebra associated with its cover ideal. We prove that $\avc(G)$ has the WLP when $\tau(G)>|V(G)|/2$, where $\tau(G)$ denotes the size of a minimum vertex cover of $G$. As a consequence, $A_c(G)$ has the WLP with high probability when  the Erd\H{o}s-R\'enyi random graph model is considered. Moreover, we study the borderline case $\tau(G)=|V(G)|/2$ and as a result, classify the WLP for paths, cycles, Ferrers graphs, and well-covered trees.
\end{abstract}
\maketitle
\section{Introduction}
Let \(A=\bigoplus_{i=0}^{D}[A]_i\) be a standard graded Artinian algebra over a field \(\Bbbk\). We say that \(A\) has the \emph{weak Lefschetz property} (WLP) if there exists a linear form \(\ell\in[A]_1\) such that, for every \(i\), the multiplication map
\[
\cdot\ell:[A]_i\longrightarrow[A]_{i+1}
\]
is either injective or surjective. Furthermore, \(A\) is said to have the \emph{strong Lefschetz property} (SLP) if there exists a linear form \(\ell\) such that, for every pair of integers \(i\geq 0\) and \(j\geq 1\), the multiplication map
\[
\cdot\ell^j:[A]_i\longrightarrow[A]_{i+j}
\]
has maximal rank.

The study of the WLP and SLP of Artinian algebra has been a central topic in commutatitve algebra, especially in the last two decades, due to its interesting connections to algebraic geometry, combinatorics, and representation theory. We refer to \cite{MN13} for a comprehensive survey of the topic.

Monomial algebras, in particular, have been a subject of intensive study \cite{AB20,BK11,CN12,KV14,LZ10,LN19,MMN11,MNS20}, partly due to their direct combinatorial connections. Moreover, Conca \cite{Conca2003} showed that the WLP of the algebra defined by the initial ideal of a homogeneous ideal, under certain circumstances, could imply  the WLP of the algebra defined by the original. Thus the understanding of the Lefschetz properties of monomial algebras has direct impact on the general case.

A special case of monomial algebras that we want to highlight is the class of Artinian algebras defined by edge ideals. Throughout this paper, all graphs are finite and simple and have at least one edge. Let \(G=(V,E)\) be a simple graph with vertex set \(V=\{1,2,\ldots,n\}\) and let
$R=\Bbbk[x_1,\ldots,x_n]$ be the standard graded polynomial ring over a field \(\Bbbk\). The \emph{edge ideal} of $G$ is
\[
I(G)\coloneqq (x_ix_j\mid \{i,j\}\in E(G)) \subseteq R.
\]
The Artinian monomial algebra $R/(I(G)+(x_1^2,\dots, x_n^2))$ and its Lefschetz properties have been studied by many \cite{DN24,HPS26a,HPS26,HN26,Kling24,MNS20,NT24b}. It is noteworthy that a full characterization of either the WLP or the SLP for this class is not known yet, underscoring the difficulty of determining these properties. In this article, we initiate the study of the dual version. Recall that the \emph{cover ideal} of \(G\) is defined to be
\[
J(G)\coloneqq\bigcap_{\{i,j\}\in E(G)}(x_i,x_j).
\]
Equivalently, $J(G)=
\big(x_C \mid C \text{ is a minimal vertex cover of }G\big),$
where $x_C=\prod_{i\in C}x_i.$
We define the Artinian algebra
\[
\avc(G)=\frac{R}{(x_1^2,\ldots,x_n^2)+J(G)},
\]
which is called the \emph{vertex cover Artinian algebra associated to \(G\)}. This leads naturally to the following problem.
\begin{question}
\label{quest_WLPofAvcG}
For which graphs \(G\) does the algebra \(\avc(G)\) have the WLP or the SLP? When \(\avc(G)\) fails to have one of these Lefschetz properties, in which degrees do the corresponding multiplication maps fail to have maximal rank?
\end{question}

A theme in the study of Lefschetz properties is how often an Artinian algebra has the WLP or the SLP. It is a  conjecture of Fr\"oberg \cite{Froberg1985} that $\mathbb{C}[x_1,\dots, x_n]/(f_1,\dots, f_m)$ has the SLP for generic forms $f_1,\dots, f_m$, with partial answers by Stanley \cite{S80} and Watanabe \cite{W87}. Another way to look at the conjecture is that: the SLP is expected to occur with high probability. On the other hand, for certain random models of Artinian monomial algebras, the WLP is expected to \textbf{not} occur with high probability \cite{Hol26,NP25}, the polar opposite. Our first result is that, limiting ourselves to some random models concerning cover ideals, we expect the WLP to occur. Let $\tau(G)$ denote the size of a minimum vertex cover of a graph $G$.

\begin{theorem}\label{thm:main1}
    The following holds.
    \begin{enumerate}
        \item If $G$ is a graph with $n$ vertices with $\tau(G)>n/2$, then $A_c(G)$ has the WLP. (Theorem~\ref{WLPtaularge})
        \item We have
        \[
        \lim_{n\to \infty}\mathbb{P}(A_c(G_{n,p}) \text{ has the WLP})=\lim_{n\to \infty}\mathbb{P}(A_c(\mathcal{G}_{n}) \text{ has the WLP})  = 1,
        \]
        where $G_{n,p}$ denotes the Erd\H{o}s-R\'enyi random graph on $n$ vertices and constant probability $p$, and $\mathcal{G}_n$ is a random graph isomorphism class on $n$ vertices with equal probability to all graphs. (Remarks~\ref{rem:Erdos-Renyi} and \ref{rem2:Erdos-Renyi})
    \end{enumerate}
\end{theorem}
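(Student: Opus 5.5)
The plan is to describe $\avc(G)$ explicitly as a quotient of $B=R/(x_1^2,\dots,x_n^2)$, and then transfer the known Lefschetz behaviour of $B$ with respect to $\ell=x_1+\dots+x_n$. A squarefree monomial $x_S$ lies in $J(G)$ exactly when $S$ contains a vertex cover, i.e.\ when $V\setminus S$ is independent. So $[\avc(G)]_i$ has as basis the $x_S$ with $|S|=i$ and $V\setminus S$ not independent, and
\[
\dim_\Bbbk[\avc(G)]_i=\binom{n}{i}-i_{n-i}(G),
\]
where $i_k(G)$ counts independent sets of size $k$. Independent sets have size at most $\alpha(G)=n-\tau(G)$. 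Hence $J(G)$ has no generators of degree $<\tau(G)$, and $[\avc(G)]_i=[B]_i$ for all $i\le \tau(G)-1$.

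For part (1), I use that $B$ has the SLP, hence the WLP, with $\ell=x_1+\dots+x_n$ (Stanley/Watanabe in characteristic zero; I adopt whatever characteristic hypothesis the paper fixes). Concretely, $\cdot\ell:[B]_i\to[B]_{i+1}$ is injective for $i\le \lceil (n-1)/2\rceil-1$ and surjective for $i\ge\lfloor (n-1)/2\rfloor$. I then argue in two ranges of degrees.

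First, surjectivity passes to quotients. Since $\avc(G)=B/\overline{J}$ and $\ell\overline J\subseteq\overline J$, surjectivity of $\cdot\ell$ on $B$ in degree $i$ gives surjectivity on $\avc(G)$ in degree $i$. This settles all $i\ge\lfloor (n-1)/2\rfloor$.

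Second, injectivity holds in low degrees because there the maps coincide. For $i\le \tau(G)-2$, both $[\avc(G)]_i$ and $[\avc(G)]_{i+1}$ equal the corresponding pieces of $B$, so the map is the same as on $B$. It remains to check the degree bookkeeping. The hypothesis $\tau(G)>n/2$ gives $\tau(G)\ge\lfloor n/2\rfloor+1$, so $\tau(G)-2\ge\lfloor n/2\rfloor-1=\lceil (n-1)/2\rceil-1$. Thus every degree where $B$ needs injectivity is covered, and the two ranges together exhaust all $i$. This is the step I would verify most carefully, separating $n$ even and $n$ odd; for $n$ odd the middle map of $B$ is bijective, so that degree is fine.

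For part (2), it suffices by part (1) to show that $\mathbb P(\alpha(G)\ge n/2)\to0$ in both models.
\begin{itemize}
\item For $G_{n,p}$ with fixed $p$, a union bound gives
\[
\mathbb P(\alpha\ge k)\le\binom nk(1-p)^{\binom k2}.
\]
With $k=\lceil n/2\rceil$ this is at most $2^n(1-p)^{n^2/9}\to0$ (indeed $\alpha=O(\log n)$ w.h.p.).
\item For $\mathcal G_n$, each isomorphism class with $\alpha\ge n/2$ contains a labelled graph with that property. So the number of bad classes is at most the number of bad labelled graphs, which is at most $\binom{n}{\lceil n/2\rceil}2^{\binom n2-\binom{\lceil n/2\rceil}2}$.
\item The total number of classes is at least $2^{\binom n2}/n!$.
\item The resulting ratio is bounded by $2^n\,n!\,2^{-\binom{\lceil n/2\rceil}2}$. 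Since $n!\le 2^{n\log_2 n}$ while the last factor is $2^{-\Theta(n^2)}$, the ratio tends to $0$.
\end{itemize}

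The main obstacle is not conceptual. It is the base input that $B$ has the WLP with $\ell=\sum x_i$ under the paper's assumption on $\Bbbk$; in positive characteristic this needs the known characterization for monomial complete intersections, e.g.\ a large characteristic.
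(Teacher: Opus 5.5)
Your proof is correct. Part (1) follows the paper's argument, and part (2) takes a different, self-contained route.

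\textbf{Part (1).} This is the paper's proof of Theorem~\ref{WLPtaularge}. Because $J(G)$ vanishes in degrees $\le\lfloor n/2\rfloor$, the low-degree maps are the injective Boolean up-maps. The remaining maps are quotients of surjective up-maps.

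There is one slip. The surjectivity range of $\cdot\ell$ on $B_n$ begins at $\lceil (n-1)/2\rceil=\lfloor n/2\rfloor$, not at $\lfloor (n-1)/2\rfloor$. For $n=2m$, the map $[B_n]_{m-1}\to[B_n]_m$ goes from dimension $\binom{2m}{m-1}$ to $\binom{2m}{m}$, so it is not surjective. This does no harm, because the degree $m-1\le\tau(G)-2$ is already covered by your injectivity range.

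\textbf{Part (2).} The paper does not prove this part; it only cites results.
\begin{itemize}
\item For $G_{n,p}$, it quotes Matula and Bollob\'as--Erd\H{o}s: $\alpha(G_{n,p})\approx 2\log_{1/(1-p)}n$.
\item For $\mathcal G_n$, it appeals to Babai's survey to transfer almost-sure properties from $G_{n,1/2}$ to the unlabeled model. That transfer ultimately rests on almost all graphs being asymmetric.
\end{itemize}

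You prove only what is needed, in two steps.
\begin{itemize}
\item You apply a first-moment bound at the crude threshold $k=\lceil n/2\rceil$. Since $\binom{k}{2}$ is quadratic in $n$, the failure probability is $2^{-\Omega(n^2)}$.
\item For the unlabeled model you use only that an isomorphism class has at most $n!=2^{O(n\log n)}$ labelings. This loss is swamped by the $2^{-\Omega(n^2)}$ factor.
\end{itemize}

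What your route buys:
\begin{itemize}
\item It is elementary and self-contained.
\item It gives the limit along all $n$. The paper's hedge ``for most $n$'', inherited from the concentration results it cites, is not needed when the threshold is linear in $n$.
\end{itemize}

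What the paper's citations buy: much sharper information, namely $\tau(G_{n,p})=n-O(\log n)$ whp. The theorem does not need this. Both arguments tacitly require $p>0$.
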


We also consider many classes of graphs, and our results are mixed. Overall, the WLP does not seem to be common.

\begin{theorem}
    \begin{enumerate}[label=\textup{(\arabic*)}]
	\item For every $n\geq 2$, the algebra $\avc(K_n)$ has the SLP, where $K_n$ denotes the complete graph on $n$ vertices. (Proposition~\ref{prop:K_n})
    \item For the path graph $P_n$, the algebra $\avc(P_n)$ has the WLP if and only if $n$ is odd or $n\leq 5$. (Theorem~\ref{thm:path_classifition})
	\item For the cycle graph $C_n$, the algebra $\avc(C_n)$ has the WLP if and only if $n$ is odd. (Theorem~\ref{thm:cycle_classification})
    \item For a Ferrers graph $G$, the algebra $A_c(G)$ has the WLP if and only if $G\in \{K_2,P_4,C_4\}$. (Theorem~\ref{thm:ferrers_classification})
	\item If $G=H\circ K_1$ with $H$ connected bipartite and $|V(H)|\geq 3$, then $\avc(G)$ fails the WLP. (Corollary~\ref{cor:connected_bipartite_corona_failure})
	\item If $T$ is a nontrivial well-covered tree, then $\avc(T)$ has the WLP if and only if $T\cong K_2$ or $T\cong P_4$. (Corollary~\ref{cor:well_covered_trees})
	\item Let $G$ be a graph with vertex set $[2m]$ such that $\tau(G)=m$.  Assume that $G$ is bipartite and has a perfect matching. Set $\Cat_m=\frac{1}{m+1}\binom{2m}{m},$ the Catalan number.
	If $c_m(G)\leq \Cat_m$, where $c_m(G)$ is the number of minimum vertex covers of $G$, then $\avc(G)$ does not have the WLP. (Propositions~\ref{prop:complementary_failure} and \ref{prop:complementary_characterization})
    \end{enumerate}
\end{theorem}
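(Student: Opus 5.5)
This theorem collects results proved later in the paper, so my plan is to build a common framework first and then treat the items one at a time. Throughout I take $\ell=x_1+\cdots+x_n$ and $\Bbbk$ of characteristic zero; for these monomial algebras it is standard that the WLP, if it holds at all, is witnessed by this $\ell$. The algebra $\avc(G)$ has a $\Bbbk$-basis of squarefree monomials $x_S$ such that $S$ contains no vertex cover. Equivalently, $V\setminus S$ is not independent. Hence
\[
h_i=\dim[\avc(G)]_i=\binom{n}{i}-i_{n-i}(G),
\]
where $i_k(G)$ is the number of independent sets of size $k$. The map $\cdot\ell$ is the Boolean-lattice up-map followed by the projection that kills the $x_T$ with $T\supseteq$ a cover.

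There are two easy facts. First, $h_i=\binom{n}{i}$ for $i<\tau(G)$, since then $n-i>\alpha(G)$. So in degrees $i\to i+1$ with $i+1<\tau(G)$, the map is the Boolean up-map, which is injective below the middle. Second, the Boolean up-map is surjective in degrees $i\ge \lfloor n/2\rfloor$, so its quotient is surjective there too. Consequently, if $\tau(G)>n/2$, every degree is covered by one of these two cases, and this proves item (1) of Theorem~\ref{thm:main1}. For item (2), I use that $\alpha(G_{n,p})=O(\log n)$ with high probability, which gives $\tau(G_{n,p})>n/2$ asymptotically almost surely; the uniform isomorphism-class model is handled the same way.

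For $K_n$, the ideal $J(K_n)$ is generated by all $(n-1)$-fold squarefree products. So $\avc(K_n)$ is the squarefree Boolean algebra truncated after degree $n-2$. For $\cdot\ell^j:[A]_i\to[A]_{i+j}$, the map either agrees with the Boolean one, which has the SLP, or has zero target and is trivially surjective. This gives the SLP.

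For the borderline case $n=2m$ with $\tau(G)=m$, the only degree not covered by the two easy facts is $m-1\to m$. There $h_{m-1}=\binom{2m}{m-1}$ and $h_m=\binom{2m}{m}-c_m(G)$, since independent $m$-sets are exactly the complements of minimum covers. If $c_m(G)>\Cat_m$, then $h_m<h_{m-1}$, so only surjectivity is possible. If $c_m(G)\le\Cat_m$, then injectivity is required. For item (7) I would exhibit a nonzero kernel element using the perfect matching $\{a_ib_i\}$ and the bipartition, starting with
\[
f=\prod_{i=1}^{m-1}(x_{a_i}-x_{b_i}).
\]
Here $\ell f=(x_{a_m}+x_{b_m})f$, and I must arrange the signs or orientation so that every surviving monomial is indexed by a minimum cover. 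This construction is the main obstacle; if it fails, I would instead argue by a rank count on the bipartite incidence between $(m-1)$-sets and non-cover $m$-sets.

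For paths, cycles, Ferrers graphs, coronas $H\circ K_1$ and well-covered trees, I compute $\tau$ and the independence polynomials explicitly. In the failing cases (even $P_n$ with $n\ge 6$, even $C_n$, Ferrers graphs other than $K_2,P_4,C_4$, and bipartite coronas), $\tau=n/2$, and I apply the borderline analysis: either item (7) directly, or a direct kernel or cokernel count in degree $m-1\to m$. For odd paths and cycles, $\tau$ is less than $n/2$, so the easy facts leave a few degrees near the middle uncovered. There I would prove surjectivity by an explicit matching argument, lifting monomials along path segments, with the small cases $n\le5$ checked by hand.
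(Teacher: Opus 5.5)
Your treatment of $K_n$ and your reduction of the borderline case $n=2m$, $\tau(G)=m$ to the single map $[\avc(G)]_{m-1}\to[\avc(G)]_m$ agree with the paper. However, the step that carries every negative result, item (7), has a genuine gap.

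For $f=\prod_{i=1}^{m-1}(x_{a_i}-x_{b_i})$ you get $\ell f=(x_{a_m}+x_{b_m})f$. This is a signed sum of the $2^m$ monomials $x_T$, where $T$ runs over the transversals of the matching. These monomials are distinct and have coefficients $\pm1$, so no choice of signs or orientation produces cancellation. Hence $\ell f=0$ in $\avc(G)$ would force every transversal to be a cover, i.e.\ $G=mK_2$. For $P_{2m}$ only $m+1$ transversals are covers, and for $C_{2m}$ only two. Your fallback rank count cannot work either: $c_m(G)\le\Cat_m$ means $h_{m-1}\le h_m$, so dimensions never force a kernel.

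The missing idea is Lemma~\ref{lem:two_block_kernel}. The two sides $O,E$ of the bipartition are complementary minimum covers, since the perfect matching forces $|O|=|E|=m=\tau$. Take $f=\sum_{r=0}^{m-1}a_r\Sigma_r$ with $a_r=\frac{(-1)^{m-1-r}}{m\binom{m-1}{r}}$, where $\Sigma_r$ is the sum of the $(m-1)$-sets meeting $O$ in $r$ points. The coefficient of $x_T$ in $U_{m-1}(f)$ is $ra_{r-1}+(m-r)a_r$ with $r=|T\cap O|$, and this vanishes for $1\le r\le m-1$. Hence $U_{m-1}(f)=x_O+(-1)^{m-1}x_E$, and $\ell f=0$ in $\avc(G)$.

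Without this, every failure assertion in (2)--(7) is unproved. For (5)--(6) you additionally need $c_m(H\circ K_1)=I(H;1)$, together with a bound such as $I(H;1)\le 3\cdot 2^{m-2}<\Cat_m$ for $m\ge 4$; no explicit computation is possible for arbitrary $H$. You also need the fact that well-covered trees are coronas of trees.

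Your positive cases are also mis-targeted. For $P_{2m+1}$ the only uncovered map is $[B_{2m+1}]_{m-1}\to[B_{2m+1}]_m/\Bbbk x_E$. Since $h_m-h_{m-1}=\frac{2}{m+2}\binom{2m+1}{m}-1>0$, surjectivity is impossible. You need injectivity, i.e.\ $x_E\notin\operatorname{Im}U_{m-1}$. The paper gets this from your own product, used on the dual side: $g=\prod_{j=1}^{m}(x_{2j-1}-x_{2j})$ lies in $\Ker D=(\operatorname{Im}U_{m-1})^{\perp}$ and $\langle g,x_E\rangle=(-1)^m\neq 0$.

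For odd cycles your claim $\tau<n/2$ is wrong: $\tau(C_{2m+1})=m+1>n/2$, so Theorem~\ref{WLPtaularge} applies and nothing is left to check.

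Finally, when you check small cases by hand, note that (3) and (4) conflict at $C_4$. In $\avc(C_4)$ one has $\ell(x_1-x_2+x_3-x_4)=0$; up to scaling this is the $f$ above with $m=2$. So $\avc(C_4)$ fails the WLP, as (3) says, and (4) is false as stated. The paper's proof of Theorem~\ref{thm:ferrers_classification} cites Theorem~\ref{thm:cycle_classification} for $C_4$, which asserts the opposite.
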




The structure of the article is as follows. In Section~\ref{sec:prem}, we provide the necessary background in both algebra and graph theory, setting up the notations for the sequel. Our first main result, Theorem~\ref{thm:main1}, together with many partial answers on the borderline case $\tau(G)=n/2$, are proved in Section~\ref{sec:largetau}. A full characterization of the WLP for paths, cycles, and Ferrers graphs is presented in Sections~\ref{sec:paths} and \ref{sec:cyclesFerrers}. Finally, we study the WLP for some corona graphs in Section~\ref{sec:corona}.

\noindent\textbf{Acknowledgment.} Chau appreciates the support by the Infosys Foundation during his postdoc at Chennai Mathematical Institute, and the Pacific Institute for the Mathematical Sciences. Chau thanks Dang Hop Nguyen and the International Center for Research and Postgraduate Training in Mathematics (ICRTM) for funding his visit to the Institute of Mathematics, Vietnam Academy of Science and Technology (VAST) in June-July 2026, where this work was initiated. The first author thanks Thiago Holleben for his helpful feedback and comments. Part of this work was carried out while the second and third authors
were visiting the Vietnam Institute for Advanced Study in Mathematics
(VIASM). They thank VIASM for its hospitality and financial support.

\section{Preliminaries}\label{sec:prem}
\subsection{The weak Lefschetz property}

Let $R=\Bbbk[x_1,\ldots,x_n]$ be the standard graded polynomial ring over a field $\Bbbk$, where each $x_i$ has degree $1$, and let $I \subset R$ be a homogeneous ideal such that $A=R/I$ is Artinian. Then $A$ is a graded Artinian algebra, which can be decomposed as
\[
A=\bigoplus_{i=0}^{D}[A]_i.
\]
\begin{definition}
We say that $A$ has the \emph{weak Lefschetz property}
(WLP) if there exists a linear form $\ell\in[A]_1$ such
that, for every integer $j$, the multiplication map
\[
\cdot\ell:[A]_j\longrightarrow[A]_{j+1}
\]
has maximal rank, that is, it is injective or surjective.
Such an $\ell$ is called a \emph{weak Lefschetz element}.

We say that $A$ has the \emph{strong Lefschetz property}
(SLP) if there exists a linear form $\ell\in[A]_1$ such
that, for every integer $j$ and every $q\geq1$, the map
\[
\cdot\ell^q:[A]_j\longrightarrow[A]_{j+q}
\]
has maximal rank. Such an $\ell$ is called a
\emph{strong Lefschetz element}.
\end{definition}
Throughout this paper, we consider Artinian algebras defined by
monomial ideals. For such algebras, the sum of the variables
suffices to test both Lefschetz properties.

\begin{theorem}
[\cite{LN19,MMN11, N18}]\label{Theorem2.5}
Let $I \subset R$ be an Artinian monomial ideal and $\ell$ be the sum of all variables. Then $A=R/I$ has the WLP if and only if
$\ell$ is a weak Lefschetz element. Likewise, $A$ has the SLP
if and only if $\ell$ is a strong Lefschetz element.
\end{theorem}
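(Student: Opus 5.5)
The plan is to combine two facts: the set of Lefschetz elements is Zariski open in $[A]_1$, and the torus $(\Bbbk^*)^n$ acts on $A=R/I$ by graded algebra automorphisms, because $I$ is a monomial ideal. As in the cited sources, I assume that $\Bbbk$ is infinite, which is the standard setting for this result. One direction is trivial: if $\ell=x_1+\cdots+x_n$ is a weak (resp. strong) Lefschetz element, then $A$ has the WLP (resp. SLP) by definition. So it suffices to show that if some linear form is a Lefschetz element, then $\ell$ is one.

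First I show that Lefschetz elements with all coefficients nonzero exist whenever any Lefschetz element exists. Write a general linear form as $L_a=\sum_i a_ix_i$ with $a=(a_1,\dots,a_n)\in\Bbbk^n$. Fix the monomial bases of the graded pieces $[A]_j$. The matrix of $\cdot L_a^q:[A]_j\to[A]_{j+q}$ then has entries that are polynomials in $a$. Maximal rank means that some maximal minor is nonzero, which is a Zariski open condition on $a$. Since only finitely many pairs $(j,q)$ matter ($A$ is Artinian), the set $U$ of weak (resp. strong) Lefschetz elements is a finite intersection of open sets, hence open. By hypothesis $U$ is nonempty. Because $\Bbbk$ is infinite, $\Bbbk^n$ is irreducible, so $U$ meets the nonempty open set $\{a_1\cdots a_n\neq0\}$. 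Thus there is a Lefschetz element $L_a$ with every $a_i\neq0$.

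Next I transport $L_a$ to $\ell$ by a diagonal change of coordinates. Let $\varphi:R\to R$ be the graded automorphism $x_i\mapsto a_ix_i$. It sends each monomial $m$ to a nonzero scalar multiple of $m$, so $\varphi(I)=I$, and $\varphi$ induces a degree-preserving automorphism $\bar\varphi$ of $A$ with $\bar\varphi(\ell)=L_a$. For every $f\in[A]_j$ we have $\bar\varphi(\ell^q f)=L_a^q\,\bar\varphi(f)$, so
\[
\bar\varphi\circ(\cdot\ell^q)=(\cdot L_a^q)\circ\bar\varphi \quad\text{on } [A]_j .
\]
Since $\bar\varphi$ restricts to isomorphisms $[A]_j\to[A]_j$ and $[A]_{j+q}\to[A]_{j+q}$, the maps $\cdot\ell^q$ and $\cdot L_a^q$ have the same rank for every $j$ and $q$. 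Taking $q=1$ handles the WLP, and taking all $q\ge1$ handles the SLP, so $\ell$ is a Lefschetz element exactly when $L_a$ is.

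The only delicate step is the first one: openness of $U$ and the use of an infinite field to find a Lefschetz element with all coefficients nonzero. The conjugation argument is purely formal once $\varphi(I)=I$, which is immediate because $I$ is a monomial ideal.
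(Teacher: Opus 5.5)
Your argument is correct over an infinite field, and it is essentially the original proof of \cite[Proposition~2.2]{MMN11}. Openness of the Lefschetz locus gives a Lefschetz element with all coefficients nonzero, and the rescaling $x_i\mapsto a_ix_i$, which preserves the monomial ideal $I$, transports it to $\ell$. The gap is one of generality. The paper intends the statement over an arbitrary field: the sentence right after the theorem cites \cite{LN19} and \cite{N18} for exactly this. Your first step fails over a finite field $\Bbbk$. There $\Bbbk^n$ is finite, so its Zariski topology is discrete and two nonempty open sets need not meet. For instance, over $\mathbb{F}_2$ the only linear form with all coefficients nonzero is $\ell$ itself. So the claim that $U$ meets $\{a_1\cdots a_n\neq 0\}$ is then exactly the assertion to be proved, and your claim that the cited sources all assume $\Bbbk$ infinite is not accurate.

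The repair is short. Suppose $L_a$ is a Lefschetz element with $a\in\Bbbk^n$, and pass to an infinite extension $K\supseteq\Bbbk$.
\begin{itemize}
\item $A\otimes_\Bbbk K$ is defined by the monomial ideal $IK[x_1,\dots,x_n]$, with the same monomial bases and the same Hilbert function. The matrices of $\cdot L_a^q$ are literally unchanged, so $L_a$ is still a Lefschetz element over $K$.
\item Your argument over $K$ then shows that $\ell$ is a Lefschetz element of $A\otimes_\Bbbk K$.
\item The matrices of $\cdot\ell^q$ have entries in the prime field (multinomial coefficients), so their ranks over $K$ and over $\Bbbk$ coincide. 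Hence $\ell$ is a Lefschetz element of $A$.
\end{itemize}

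Alternatively, you can bypass both openness and the torus action. In the monomial bases, the entry of $\cdot L_a^q$ at position $(m',m)$ is $\frac{q!}{\gamma_1!\cdots\gamma_n!}\,a^\gamma$ when $m'=x^\gamma m$, and $0$ otherwise. So in any fixed square minor, every nonzero term of the Leibniz expansion carries the same monomial $a^\beta$, where $x^\beta$ is the product of the row monomials divided by the product of the column monomials. The minor therefore equals $N\,a^\beta$ with $N$ the image of an integer. If it is nonzero at $a$, then the same minor for $\ell$ equals $N\neq 0$.

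Note that from Section~\ref{sec:largetau} on the paper assumes $\operatorname{char}(\Bbbk)=0$. So your infinite-field version already covers every use of the theorem in the paper.
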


Theorem~\ref{Theorem2.5} was first proved in \cite[Proposition~2.2]{MMN11} over an infinite field, and later in \cite[Proposition~4.3]{LN19} over an arbitrary field.  The SLP assertion also holds
over an arbitrary field; see \cite[Theorem~2.2]{N18}.

In view of Theorem~\ref{Theorem2.5}, we shall frequently work with multiplication by the linear form $\ell=x_1+\cdots+x_n$. Let $B_n=R/(x_1^2,\ldots,x_n^2)$ be the Boolean algebra. Note that $B_n$ has a natural $\Bbbk$-basis consisting of the squarefree monomials $x_A=\prod_{x_i\in A}x_i$, where $A\subseteq \{x_1,\ldots,x_n\}$. For each $d\geq 0$, multiplication by $\ell$ induces a linear map $U_d:[B_n]_d\longrightarrow [B_n]_{d+1}$, called the \textit{up-map}. On the squarefree monomial basis, it is given by $U_d(x_A)=\sum_{x_i\notin A}x_{A\cup\{x_i\}}$.

We define the monomial inner product on $B_n$ by
\[
\langle x_A,x_B\rangle=
\begin{cases}
	1, & \text{if } A=B,\\
	0, & \text{otherwise}.
\end{cases}
\]
With respect to this inner product, the adjoint of $U_d$ is the \textit{down-map} 
\[
D_{d+1}:[B_n]_{d+1}\longrightarrow [B_n]_d,
\]
given by $D_{d+1}(x_A)=\sum_{x_i\in A}x_{A\setminus\{x_i\}}$. Equivalently, $D=\partial/\partial x_1+\cdots+\partial/\partial x_n$ on squarefree monomials. Thus, for $f\in [B_n]_d$ and $g\in [B_n]_{d+1}$, one has $\langle U_d(f),g\rangle=\langle f,D_{d+1}(g)\rangle$.

\begin{theorem}[\cite{PT23,S80, W87}]\label{SLP_Stanley}
Assume that $\operatorname{char}(\Bbbk)=0$. The algebra $B_n=R/(x_1^2,\ldots,x_n^2)$ has the SLP. 
In particular, set $m=\lfloor \frac n2\rfloor$, the up-map
$U_d:[B_n]_d\longrightarrow [B_n]_{d+1}$ is injective for $d<m$ and surjective for $d\geq m$. Equivalently, the down-map
$D_{d+1}:[B_n]_{d+1}\longrightarrow [B_n]_d$ is surjective for $d<m$ and injective for $d\geq m$.
\end{theorem}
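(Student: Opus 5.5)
The plan is to realize $B_n$ as a direct sum of tensor powers of $\mathfrak{sl}_2$-representations and read off the maximal-rank statements from the structure of finite-dimensional $\mathfrak{sl}_2$-modules. This is the classical argument of Stanley and Proctor. Since the statement concerns only the up-map $U=\cdot\ell$ with $\ell=x_1+\cdots+x_n$, Theorem~\ref{Theorem2.5} is not needed: it is enough to show that every power $\ell^q$ has maximal rank between $[B_n]_j$ and $[B_n]_{j+q}$. The WLP consequences in the ``in particular'' clause are then the case $q=1$.

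First I set up the $\mathfrak{sl}_2$-structure. Let $H$ act on $[B_n]_d$ by multiplication by $2d-n$. On squarefree monomials one checks directly that
\[
D_{d+1}U_d-U_{d-1}D_d=(n-2d)\,\mathrm{id}_{[B_n]_d}.
\]
Indeed, the diagonal contributions are $n-d$ and $d$, and the off-diagonal terms, which come from swapping one element for another, cancel. Hence $U$, $D$ and $H$ satisfy the $\mathfrak{sl}_2$ relations $[U,D]=H$, $[H,U]=2U$ and $[H,D]=-2D$, up to sign conventions. Since $\operatorname{char}\Bbbk=0$, finite-dimensional $\mathfrak{sl}_2$-modules are completely reducible into irreducibles $V(k)$. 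Each $V(k)$ has weights $-k,-k+2,\dots,k$, each with multiplicity one, and $U$ maps every weight space isomorphically onto the next one except at the top.

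Next I deduce maximal rank. The weights $2d-n$ are symmetric about $0$, and degree $d$ corresponds to weight $2d-n$. Take $q\ge 1$ and a summand $V(k)$. The map $U^q$ from weight $w$ to weight $w+2q$ is an isomorphism on that summand whenever both weights lie in $[-k,k]$, and it is zero otherwise.
\begin{itemize}
\item If $|2j-n|\le|2(j+q)-n|$, then every summand meeting degree $j+q$ also meets degree $j$, so $U^q$ is surjective.
\item In the opposite case, every summand meeting degree $j$ also meets degree $j+q$, so $U^q$ is injective.
\end{itemize}
For $q=1$, the comparison reduces to whether $d<m$ or $d\ge m$ with $m=\lfloor n/2\rfloor$. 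When $n$ is odd and $d=m$, the two weights are $\mp 1$, of equal absolute value, so the map is bijective and the "surjective for $d\ge m$" claim holds. The statement about $D$ follows because $D_{d+1}$ is the adjoint of $U_d$ with respect to the nondegenerate monomial inner product. An injective map has a surjective adjoint and conversely, since both rank and transpose behave this way over a field.

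The main obstacle is verifying the commutator identity carefully, and in particular checking that the cross terms $x_{A\setminus\{i\}\cup\{j\}}$ with $i\ne j$ appear with coefficient $1$ in both $DU$ and $UD$, so that they cancel. An alternative route avoids representation theory. Since $B_n\cong B_1^{\otimes n}$ and $B_1$ is $\mathfrak{sl}_2$-irreducible $V(1)$, the tensor product is a representation with $U=\sum_i U^{(i)}$, and Clebsch–Gordan gives complete reducibility. Either route needs characteristic $0$, or at least large enough characteristic, and that is exactly where the hypothesis is used.
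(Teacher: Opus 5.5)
Your proof is correct. The paper does not prove this theorem at all; it quotes it from the literature. Stanley's original route identifies $B_n$ over $\mathbb{C}$ with $H^{2*}\bigl((\mathbb{P}^1)^n;\mathbb{C}\bigr)$, with $\ell$ the class of an ample divisor, and obtains the SLP from the hard Lefschetz theorem.

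You instead give the purely algebraic $\mathfrak{sl}_2$ proof (essentially Proctor's). It needs only two inputs: the identity $D_{d+1}U_d-U_{d-1}D_d=(n-2d)\,\mathrm{id}$, and complete reducibility in characteristic $0$. You verify the identity correctly: the diagonal contributions are $n-d$ and $d$, and each cross term $x_{(A\setminus\{j\})\cup\{i\}}$ with $j\in A$, $i\notin A$ occurs exactly once in each composite.

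Your weight bookkeeping is also sound. All weights of $B_n$ have the parity of $n$, so a summand $V(k)$ meets degree $j$ exactly when $|2j-n|\le k$. For $q=1$, the inequality $|2d-n|\le|2d+2-n|$ is equivalent to $d\ge (n-1)/2$, i.e.\ $d\ge\lfloor n/2\rfloor$. What your route buys is a self-contained proof with no Hodge theory, which also exhibits $x_1+\cdots+x_n$ itself as a strong Lefschetz element.

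The paper only ever uses the case $q=1$, and your identity alone already gives that case without Weyl's theorem. Because $U_{d-1}=D_d^{T}$, the operator $D_{d+1}U_d=D_d^{T}D_d+(n-2d)\,\mathrm{id}$ is positive definite over $\mathbb{Q}$ for $d<n/2$. So $U_d$ is injective there; the matrices are integral, so their ranks are the same over every field of characteristic $0$. Surjectivity for $d\ge\lfloor n/2\rfloor$ then follows because complementation $x_A\mapsto x_{[n]\setminus A}$ conjugates $D_{d+1}$ to $U_{n-d-1}$.
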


We now return to general Artinian graded algebras and recall a basic necessary condition for the WLP. To state it, we first introduce the Hilbert series and the notion of unimodality.

\begin{definition}
Let $\Bbbk$ be a field and $A= \bigoplus_{j\geq 0} [A]_j$ be a standard graded $\Bbbk$-algebra. The \emph{Hilbert series} of $A$ is the power series 
\[	
H_A(t) = \sum_{j\geq 0} \dim_\Bbbk [A]_j \, t^j.
\]
If $A$ is Artinian, then $[A]_i=0$ for $i \gg 0$. We denote 
\[
e := \max \{i \mid [A]_i \neq 0\},
\]
and we call $e$ the \emph{socle degree} of $A$. 
In this case, the Hilbert series of $A$ is a polynomial
\[
H_A(t) = 1 + h_1 t + \cdots + h_e t^e,
\]	
where $h_i = \dim_\Bbbk [A]_i > 0$. 
\end{definition}
\begin{definition}
\label{mode}
A polynomial $\sum_{k=0}^n a_k t^k$ with non-negative coefficients is called \emph{unimodal} if there exists an integer $m$ such that 
\[
a_0 \leq a_1 \leq \cdots \leq a_{m-1} \leq a_m \geq a_{m+1} \geq \cdots \geq a_n.
\]
Set $a_{-1}=0$. The \emph{mode} of a unimodal polynomial $\sum_{k=0}^n a_k t^k$ is defined to be the unique integer $i \in \{0,\dots,n\}$ such that 
\[
a_{i-1} < a_i \geq a_{i+1} \geq \cdots \geq a_n.
\]
\end{definition}

\begin{example}
The polynomial $5 + 5t + 5t^2 + 5t^3 + 5t^4$ is unimodal of mode $0$, while the polynomial $1 + 5t + 5t^2 + 5t^3 + 5t^4$ is unimodal of mode $1$.
\end{example}

\begin{proposition}
[{\cite[Proposition~3.2]{HMMNWW13}}]\label{Proposition2.8}
If $A$ has the WLP, then the Hilbert series of $A$ is unimodal.
\end{proposition}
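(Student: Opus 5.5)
The plan is to argue by contrapositive. Suppose $\ell$ is a weak Lefschetz element of $A$; we want to show that the Hilbert function $h_i=\dim_\Bbbk[A]_i$ is unimodal. Since $A$ is standard graded, $A$ is generated in degree $1$, so surjectivity of $\cdot\ell$ propagates upward. Concretely, if $\cdot\ell:[A]_j\to[A]_{j+1}$ is surjective, then $[A]_{j+1}=\ell[A]_j$. Multiplying by $\ell$ gives
\[
\ell[A]_{j+1}=\ell^2[A]_j\subseteq \ell[A]_{j+1}.
\]
This observation alone is not yet the right one; the statement we actually need is that $[A]_{j+2}=\ell[A]_{j+1}$.

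The key step is this propagation of surjectivity: if $\cdot\ell$ is surjective from degree $j$ to degree $j+1$, then it is surjective from degree $j+1$ to degree $j+2$. Pass to the quotient $A/\ell A$, which is again a standard graded algebra. Surjectivity of $\cdot\ell$ in degree $j$ is equivalent to $[A/\ell A]_{j+1}=0$. In a standard graded algebra, once a graded piece vanishes, every higher piece vanishes too, because $[A/\ell A]_{j+2}=[A/\ell A]_1\cdot[A/\ell A]_{j+1}=0$. Hence $\cdot\ell$ is surjective in all degrees $\ge j$. I expect this to be the only genuinely nontrivial point, and the passage to $A/\ell A$ handles it cleanly.

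With propagation in hand, let $m$ be the smallest index such that $\cdot\ell:[A]_m\to[A]_{m+1}$ is surjective. Such an $m$ exists because $[A]_{e+1}=0$. By the propagation step, $\cdot\ell$ is surjective for all $i\ge m$, which gives $h_m\ge h_{m+1}\ge\cdots\ge h_e$. For each $i<m$, the map $\cdot\ell$ is not surjective, so by the WLP it must be injective, which gives $h_0\le h_1\le\cdots\le h_m$. Putting the two chains together shows that $H_A(t)$ is unimodal with peak at $m$, and this completes the argument.
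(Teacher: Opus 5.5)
Your argument is correct. The paper gives no proof of its own and only cites \cite[Proposition~3.2]{HMMNWW13}; your argument is the standard proof of that result: surjectivity of $\cdot\ell$ propagates upward, because $[A/\ell A]_{j+1}=0$ forces $[A/\ell A]_{j+2}=[A/\ell A]_1[A/\ell A]_{j+1}=0$ in a standard graded algebra. Two blemishes are purely cosmetic: you announce a contrapositive but then argue directly, and the displayed inclusion $\ell[A]_{j+1}=\ell^2[A]_j\subseteq\ell[A]_{j+1}$ is a vacuous detour that can be deleted.
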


\subsection{Graph theory}

Throughout this subsection, by a \emph{graph} we mean a simple, undirected graph $G=(V,E)$ with vertex set $V=V(G)$ and edge set $E=E(G)$. 
We begin by recalling some basic definitions.
\begin{definition}
The \emph{disjoint union} of two graphs $G_1$ and $G_2$ is the graph $G = G_1 \cup G_2$ whose vertex set is the disjoint union of $V(G_1)$ and $V(G_2)$, and whose edge set is the disjoint union of $E(G_1)$ and $E(G_2)$. 
\end{definition}
\begin{definition}
Let $G=(V,E)$ be a graph with vertex set $V=\{1,2,\dots,n\}$.
\begin{itemize}
\item[(a)] A subset $X \subset V$ is called an \emph{independent set} if, for all $i,j \in X$, we have $\{i,j\} \notin E$. An independent set $X$ of cardinality $k$ is called a \emph{$k$-independent set} of $G$.	 
\item[(b)] An independent set $X$ is called a maximal independent set if, for every $v\in V(G)\setminus X$, the set $X\cup\{v\}$ is not independent.
\item[(c)] The \emph{independence number} of $G$ is the maximum cardinality of an independent set in $G$, denoted by $\alpha(G)$. 
\item[(d)] The \emph{independence polynomial} of $G$ is the polynomial in the variable $t$ whose coefficient of $t^k$ is the number of $k$-independent sets. We denote it by $I(G;t)$, that is,
\[
I(G;t)= \sum_{k=0}^{\alpha(G)} s_k(G)t^k,
\]
where $s_k(G)$ denotes the number of $k$-independent sets of $G$.
\end{itemize}
\end{definition}

The independence polynomial of a graph was defined by Gutman and Harary in \cite{GH83} as a generalization of the matching polynomial of a graph.

\begin{definition} 
Let $G=(V,E)$ be a graph with vertex set $V=\{1,2,\dots,n\}$.
\begin{enumerate}
\item [(a)] A subset $C\subseteq V$ is a \emph{vertex cover} of $G$ if every edge of $G$ has at least one endpoint in $C$. Equivalently,
\[
\forall \{u,v\}\in E\quad\Longrightarrow\quad u\in C\text{ or }v\in C.
\]
\item [(b)]  A vertex cover $C\subseteq V(G)$ is called a \emph{minimal vertex cover} if no proper subset of $C$ is a vertex cover of $G$. Equivalently, for every $v\in C$, the set $C\setminus\{v\}$ is not a vertex cover of $G$.
\item [(c)] The \emph{vertex-cover number} of $G$, denoted by $\tau(G)$, is the minimum cardinality of a vertex cover of $G$.
\end{enumerate}
\end{definition}
\begin{definition} 
Let $G=(V,E)$ be a graph with vertex set $V=\{1,2,\dots,n\}$. Let $R=\Bbbk[x_1,\ldots,x_n]$.
\begin{enumerate}
\item [(a)] The \emph{cover ideal} of $G$ is
\[
J(G)=(x_C:C\text{ is a vertex cover of }G)\subset R,
\]
where $x_C=\prod_{i\in C}x_i$. Equivalently, $J(G)=\bigcap_{\{u,v\}\in E}(x_u,x_v).$
\item [(b)] The \emph{vertex cover Artinian algebra} associated to $G$ is
\[
\avc(G)=\frac{R}{\big((x_1^2,\ldots,x_n^2)+J(G)\big)}.
\]
\end{enumerate}
\end{definition}

\begin{proposition}\label{prop:HilbertseriesofG}
For every $d\ge 0$,
\[
[\avc(G)]_d=\operatorname{span}_{\Bbbk}\{x_A: |A|=d,\ A\text{ is not a vertex cover of }G\}.
\]
In particular, if $c_d(G)$ denotes the number of vertex covers of $G$ of cardinality $d$, then
\[
\dim_{\Bbbk}[\avc(G)]_d=\binom{n}{d}-c_d(G).
\]
\end{proposition}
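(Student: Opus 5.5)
The plan is to pass from the presentation of $\avc(G)$ to the squarefree monomial basis in two steps. First, I reduce modulo the squares. Since $(x_1^2,\ldots,x_n^2)\subseteq (x_1^2,\ldots,x_n^2)+J(G)$, we have
\[
\avc(G)\cong B_n/\overline{J(G)},
\]
where $\overline{J(G)}$ is the image of $J(G)$ in the Boolean algebra $B_n$. The squarefree monomials $x_A$, $A\subseteq\{1,\ldots,n\}$, form a $\Bbbk$-basis of $B_n$; every other monomial is zero there. So each graded piece $[B_n]_d$ has basis $\{x_A : |A|=d\}$.

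Second, I identify the degree-$d$ part of $\overline{J(G)}$. Since $J(G)$ is a monomial ideal generated by the $x_C$ with $C$ a (minimal) vertex cover, $\overline{J(G)}$ is spanned by the images of the monomials $m\cdot x_C$. Such an image is nonzero exactly when it is a squarefree monomial $x_A$ with $C\subseteq A$. A superset of a vertex cover is again a vertex cover, because every edge still has an endpoint in $A$. Conversely, if $A$ is a vertex cover, it contains a minimal vertex cover $C$, and $x_A=x_{A\setminus C}\,x_C\in J(G)$. Hence
\[
[\overline{J(G)}]_d=\operatorname{span}_{\Bbbk}\{x_A: |A|=d,\ A \text{ is a vertex cover of } G\}.
\]

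Because this is a span of a subset of the monomial basis of $[B_n]_d$, the quotient has as basis the images of the remaining monomials. These are the $x_A$ with $|A|=d$ and $A$ not a vertex cover, which gives the first claim. Counting, there are $\binom nd$ subsets of size $d$, and $c_d(G)$ of them are vertex covers, so
\[
\dim_\Bbbk[\avc(G)]_d=\binom nd-c_d(G).
\]

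The only point needing care is the upward-closure of vertex covers, which makes the monomial ideal $\overline{J(G)}$ coincide degreewise with the span of the vertex-cover monomials. Once that is settled, the rest is routine bookkeeping for monomial quotients.
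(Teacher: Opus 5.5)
Your proof is correct and follows the same approach as the paper. You pass to the squarefree monomial basis of $B_n$ and observe that $x_A$ lies in $J(G)$ exactly when $A$ is a vertex cover; you also spell out the upward-closure of vertex covers and the monomial-quotient basis fact, which the paper leaves implicit.
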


\begin{proof}
In the quotient by the squares of the variables, the remaining monomials are exactly the squarefree monomials $x_A$. Such a monomial is zero in $\avc(G)$ if and only if it lies in $J(G)$, which is equivalent to $A$ being a vertex cover.
\end{proof}

A subset $C\subseteq V$ is a vertex cover if and only if its complement $V\setminus C$ is an independent set. Therefore, one has $c_d(G)=s_{n-d}(G).$
Consequently,
\begin{align*}
H_{\avc(G)}(t)&=(1+t)^{n}-\sum_{d\ge 0}c_d(G)t^d\\
&=(1+t)^{n}-\sum_{d\ge 0}s_{n-d}(G)t^{d}.
\end{align*}
Throughout this article let $[n]$ denote the set $\{1,2,\dots, n\}$ for each integer $n$.

\begin{lemma}\label{lem:upset_shadow_growth}
Let $\mathcal F\subseteq 2^{[n]}$ be an up-set, that is, if $A\in \mathcal F$ and
$A\subseteq B\subseteq [n]$, then $B\in \mathcal F$. For each $r$, set
\[
\mathcal F^{(r)}=\{A\in \mathcal F: |A|=r\}.
	\]
	Then
$|\mathcal F^{(r-1)}| \le |\mathcal F^{(r)}|$, for all $1\leq r\leq \left\lceil\frac n2\right\rceil.$
\end{lemma}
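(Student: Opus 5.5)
We argue by double counting the inclusion relation between consecutive levels of $\mathcal F$. Fix $r$ with $1\le r\le \lceil n/2\rceil$ and consider the bipartite graph between $\mathcal F^{(r-1)}$ and $\mathcal F^{(r)}$, where $A\in\mathcal F^{(r-1)}$ is joined to $B\in\mathcal F^{(r)}$ whenever $A\subseteq B$. We count its edges $N$ in two ways.

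First, each $A\in\mathcal F^{(r-1)}$ has exactly $n-(r-1)=n-r+1$ supersets of size $r$, namely $A\cup\{i\}$ for $i\notin A$. Since $\mathcal F$ is an up-set, all of them lie in $\mathcal F^{(r)}$. Hence
\[
N=(n-r+1)\,|\mathcal F^{(r-1)}|.
\]
Second, each $B\in\mathcal F^{(r)}$ has at most $r$ subsets of size $r-1$, and only some of them need lie in $\mathcal F$. Hence
\[
N\le r\,|\mathcal F^{(r)}|.
\]
Combining the two counts gives
\[
|\mathcal F^{(r-1)}|\le \frac{r}{n-r+1}\,|\mathcal F^{(r)}|.
\]

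It remains to check that $r\le n-r+1$, that is, $2r\le n+1$, so that the factor $\frac{r}{n-r+1}$ is at most $1$. If $n$ is even, then $r\le n/2$ and $2r\le n<n+1$. If $n$ is odd, then $r\le (n+1)/2$ and $2r\le n+1$. In both cases the factor is at most $1$, and therefore $|\mathcal F^{(r-1)}|\le|\mathcal F^{(r)}|$ for all $1\le r\le\lceil n/2\rceil$.

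The argument is elementary and has no real obstacle. The only points requiring care are that the up-set hypothesis is used exactly once, to ensure all $n-r+1$ supersets of each $A\in\mathcal F^{(r-1)}$ lie in $\mathcal F$, and that the range $r\le\lceil n/2\rceil$ is precisely what makes $r\le n-r+1$ hold.
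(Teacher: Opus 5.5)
Your proof is correct and is essentially the paper's double-counting argument. The only difference is cosmetic: the paper takes the upper shadow $\nabla\mathcal F^{(r-1)}$ as the right-hand vertex set and then uses $\nabla\mathcal F^{(r-1)}\subseteq\mathcal F^{(r)}$, whereas you use all of $\mathcal F^{(r)}$ directly, which folds that containment step into the count.
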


\begin{proof}
	Fix an integer $r$ with $1\leq r\leq \left\lceil n/2\right\rceil$.
We define the upper shadow of $\mathcal F^{(r-1)}$ by
\[
\nabla \mathcal F^{(r-1)}
=
\left\{
A\cup\{x\}: A\in \mathcal F^{(r-1)},\ x\in [n]\setminus A
\right\}.
\]
Since $\mathcal F$ is an up-set, every set in $\nabla \mathcal F^{(r-1)}$ belongs to
	$\mathcal F$. Hence $	\nabla \mathcal F^{(r-1)}\subseteq \mathcal F^{(r)}.$
	Therefore it is enough to show that
	\[
	|\nabla \mathcal F^{(r-1)}|\geq |\mathcal F^{(r-1)}|.
	\]
We construct a bipartite graph $\Gamma$ as follows. Its left vertex set is $\mathcal F^{(r-1)}$, and its right vertex set is $\nabla \mathcal F^{(r-1)}$. We join $A\in \mathcal F^{(r-1)}$ to $B\in \nabla \mathcal F^{(r-1)}$ if and only if $A\subseteq B.$
	
	We count the edges of $\Gamma$ in two ways. 
	First, fix $A\in\mathcal F^{(r-1)}$. Since $|A|=r-1$, there are exactly
	$n-r+1$ elements in $[n]\setminus A$. For each $x\in [n]\setminus A$, the set
	$A\cup\{x\}$ has cardinality $r$ and contains the set $A\in\mathcal F^{(r-1)}$.
	Therefore, by the definition of the upper shadow,
	\[
	A\cup\{x\}\in \nabla\mathcal F^{(r-1)}.
	\]
	Thus $A$ is adjacent in $\Gamma$ to exactly the sets $A\cup\{x\}$, where
	$x\in [n]\setminus A$. Hence the degree of $A$ in $\Gamma$ is exactly $n-r+1$.
 Hence the number of edges of $\Gamma$ is
	\[
	e(\Gamma)=(n-r+1)|\mathcal F^{(r-1)}|.
	\]
	
	On the other hand, fix $B\in \nabla \mathcal F^{(r-1)}$. Since $|B|=r$, the set $B$
	has exactly $r$ subsets of cardinality $r-1$. Hence $B$ can be adjacent to at most
	$r$ vertices of $\mathcal F^{(r-1)}$. Therefore
	\[
	e(\Gamma)\leq r|\nabla \mathcal F^{(r-1)}|.
	\]
	Combining the two estimates gives
	\[
	(n-r+1)|\mathcal F^{(r-1)}|
	\leq
	r|\nabla \mathcal F^{(r-1)}|.
	\]
	Since $r\leq \left\lceil n/2\right\rceil$, we have $n-r+1\geq r$. Hence $|\mathcal F^{(r-1)}| \leq|\nabla \mathcal F^{(r-1)}|.$
Together with $\nabla \mathcal F^{(r-1)}\subseteq \mathcal F^{(r)}$, this gives
	\[
	|\mathcal F^{(r-1)}|
	\leq
	|\nabla \mathcal F^{(r-1)}|
	\leq
	|\mathcal F^{(r)}|
	\]
	This proves the assertion.
\end{proof}

\begin{proposition}\label{pro:unimodal_last}
Let $G$ be a graph on $n$ vertices, and set $m=\lfloor n/2\rfloor$. If 	we write $H_{\avc(G)}(t)=\sum_{d= 0}^nh_dt^d$, then
\[
h_m\ge h_{m+1}\ge h_{m+2}\ge\cdots \ge h_n.
\]
In other words, the Hilbert function of $\avc(G)$ is non-increasing from degree $m$.
\end{proposition}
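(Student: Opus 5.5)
The plan is to pass to complements, so that the Hilbert function becomes the sizes of the levels of an up-set, and then use Lemma~\ref{lem:upset_shadow_growth}. By Proposition~\ref{prop:HilbertseriesofG}, $h_d$ is the number of $d$-subsets of $[n]$ that are not vertex covers of $G$. The map $A\mapsto [n]\setminus A$ is a bijection from $d$-subsets to $(n-d)$-subsets. Under it, $A$ fails to be a vertex cover exactly when $[n]\setminus A$ fails to be an independent set.

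Let $\mathcal F\subseteq 2^{[n]}$ be the family of subsets that are not independent, i.e.\ the sets containing at least one edge of $G$. Then
\[
h_d=|\mathcal F^{(n-d)}| \quad \text{for every } d.
\]
The family $\mathcal F$ is an up-set: if $B\supseteq A$ and $A$ contains an edge, then so does $B$.

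For $d\ge m$ we compare $h_d$ and $h_{d+1}$ by setting $r=n-d$. This gives $h_{d+1}=|\mathcal F^{(r-1)}|$ and $h_d=|\mathcal F^{(r)}|$. The condition $d\ge m=\lfloor n/2\rfloor$ translates to
\[
r=n-d\le n-\lfloor n/2\rfloor=\lceil n/2\rceil.
\]
The range also requires $r\ge 1$, which holds since $d+1\le n$ means $d\le n-1$.

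Lemma~\ref{lem:upset_shadow_growth} therefore applies and gives $|\mathcal F^{(r-1)}|\le|\mathcal F^{(r)}|$, that is, $h_{d+1}\le h_d$ for all $m\le d\le n-1$. This is the claimed chain of inequalities.

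There is no real obstacle here. The one point needing care is the index translation: checking that $\lfloor n/2\rfloor$ on the degree side corresponds exactly to the bound $\lceil n/2\rceil$ in Lemma~\ref{lem:upset_shadow_growth}, including the case where $n$ is odd.
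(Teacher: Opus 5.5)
Your proposal is correct and follows essentially the same route as the paper: both identify $h_d$ with $|\mathcal F^{(n-d)}|$ for the up-set $\mathcal F$ of subsets containing an edge, and both apply Lemma~\ref{lem:upset_shadow_growth} with $r=n-d$, using $1\le n-d\le\lceil n/2\rceil$ for $m\le d\le n-1$. Your index check is the same as the paper's and is handled correctly, including odd $n$.
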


\begin{proof}
Let $\mathcal F$ be the family of all subsets of $V(G)$ containing at least one edge of $G$. 
For each $r$, set \[ \mathcal F^{(r)}=\{A\in\mathcal F: |A|=r\}. \] 
A subset $C\subseteq V(G)$ is not a vertex cover of $G$ if and only if its complement $V(G)\setminus C$ contains an edge of $G$. Therefore, by Proposition~\ref{prop:HilbertseriesofG}, $h_d=\mathcal F^{(n-d)},$ for every $0\leq d\leq n$.
	
We claim that \[ |\mathcal F^{(r-1)}|\leq |\mathcal F^{(r)}| \qquad\text{for all }1\leq r\leq \left\lceil\frac n2\right\rceil. \] Indeed, $\mathcal F$ is an up-set: if a set contains an edge of $G$, then every larger set also contains an edge of $G$. Hence the claim follows from Lemma~\ref{lem:upset_shadow_growth}. Now let $m\leq d\leq n-1$. Then \[ 1\leq n-d\leq \left\lceil\frac n2\right\rceil. \] Applying the claim with $r=n-d$, we get \[ |\mathcal F^{(n-d-1)}|\leq |\mathcal F^{(n-d)}|. \] Hence \[ h_d=|\mathcal F^{(n-d)}| \geq |\mathcal F^{(n-d-1)}| = h_{d+1}. \] Therefore \[ h_m\geq h_{m+1}\geq h_{m+2}\geq\cdots\geq h_n. \] 
This proves the assertion.
\end{proof}

We end this section with a consequence on the unimodality of the Hilbert series of $\avc(G)$ when the vertex cover number of $G$ is large.
\begin{corollary}\label{cor:unimodal_tau=n/2}
Let $G$ be a graph on $n$ vertices, and set $m=\lfloor n/2\rfloor$. Assume that $\tau(G)\geq m$. Then the Hilbert series of $\avc(G)$ is unimodal. Moreover, its mode is either $m$ or $m-1$.
\end{corollary}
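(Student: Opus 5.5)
The plan is to split the Hilbert function $h_d=\binom{n}{d}-c_d(G)$ into the range below $\tau(G)$ and the range from $m$ upward. Proposition~\ref{pro:unimodal_last} already gives
\[
h_m\ge h_{m+1}\ge\cdots\ge h_n ,
\]
so only the initial segment $h_0,\dots,h_m$ needs attention.

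First, the initial segment is binomial. By definition of $\tau(G)$ there are no vertex covers of size less than $\tau(G)$, so $c_d(G)=0$ and $h_d=\binom nd$ for all $d<\tau(G)$. Since $\tau(G)\ge m$, this holds for every $d\le m-1$. Because $m-1<\lfloor n/2\rfloor$, the binomial coefficients are strictly increasing there:
\[
h_0<h_1<\cdots<h_{m-1}.
\]

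Second, compare $h_{m-1}$ and $h_m$, splitting into two cases.
\begin{itemize}
\item If $h_{m-1}\le h_m$, the sequence increases up to index $m$ and is non-increasing afterwards. It is therefore unimodal, and since $h_{m-1}<h_m$ or $h_{m-1}=h_m$, the mode in the sense of Definition~\ref{mode} is $m$ or $m-1$ respectively.
\item If $h_{m-1}>h_m$, the sequence increases strictly to index $m-1$ and is non-increasing from $m-1$ on. It is again unimodal, with mode $m-1$.
\end{itemize}
The edge case $m=0$ does not arise, because $G$ has at least one edge, so $n\ge2$ and $m\ge1$.

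The only step needing care is matching the notion of mode in Definition~\ref{mode}, which requires $a_{i-1}<a_i$ and non-increasing afterwards. The strict increase of $\binom nd$ for $d\le m-1$ supplies the strict inequality at the right position, so I do not expect any real obstacle. The substantive input is Proposition~\ref{pro:unimodal_last}, which is assumed.
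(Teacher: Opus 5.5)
Your proposal is correct and follows the paper's proof essentially verbatim. Both arguments use $h_d=\binom nd$, strictly increasing, for $d\le m-1$, since there are no vertex covers of size below $\tau(G)\ge m$, and then Proposition~\ref{pro:unimodal_last} for the tail. Your explicit case split on $h_{m-1}$ versus $h_m$ just spells out the mode determination that the paper leaves implicit.
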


\begin{proof}
	Write
	\[
	H_{\avc(G)}(t)=\sum_{d= 0}^nh_dt^d.
	\]
	Since $\tau(G)\geq m$, the cover ideal $J(G)$ has no nonzero component in degrees less than $m$. Hence $[\avc(G)]_d=[B_n]_d$ for all $d<m$.
	Therefore $h_d=\binom{n}{d}$ for all $d<m$. Since $m=\lfloor n/2\rfloor$, we get
	\[
	h_0<h_1<\cdots<h_{m-1}.
	\]
	
By Proposition~\ref{pro:unimodal_last},
	\[
	h_m\geq h_{m+1}\geq h_{m+2}\geq \cdots \ge h_n.
	\]
	
Thus the only possible change in monotonicity occurs between $h_{m-1}$ and $h_m$. Consequently, the Hilbert series is unimodal, and its mode is either $m-1$ or $m$.
\end{proof}

\section{Graphs with large vertex cover number}\label{sec:largetau}
From now on, we assume that $\operatorname{char}(\Bbbk)=0$. Moreover, whenever $G$ has vertex set $[n]$, we write $\ell=x_1+\cdots+x_n$.
\begin{theorem}\label{WLPtaularge}
Let \(G\) be a graph on \(n\) vertices. Assume that $\tau(G)>\frac n2,$ where \(\tau(G)\) denotes the vertex cover number of \(G\). Then $\avc(G)$ has the weak Lefschetz property.
\end{theorem}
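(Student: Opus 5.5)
The plan is to compare $\avc(G)$ directly with the Boolean algebra $B_n$ and use Theorem~\ref{SLP_Stanley}, splitting the degrees at $m=\lfloor n/2\rfloor$. By Theorem~\ref{Theorem2.5} it suffices to show that $\ell=x_1+\cdots+x_n$ is a weak Lefschetz element. The first step is degree bookkeeping. The hypothesis $\tau(G)>n/2$ gives $\tau(G)\geq m+1$: if $n=2m$ this is immediate, and if $n=2m+1$ then $\tau(G)>m+\tfrac12$. Every generator $x_C$ of $J(G)$ has degree at least $\tau(G)\geq m+1$. Hence, by Proposition~\ref{prop:HilbertseriesofG}, $[\avc(G)]_d=[B_n]_d$ for all $d\leq m$, since no $d$-subset with $d\leq m$ is a vertex cover.

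For the low degrees $d<m$, both $[\avc(G)]_d$ and $[\avc(G)]_{d+1}$ coincide with the corresponding graded pieces of $B_n$, because $d+1\leq m$. The multiplication map $\cdot\ell:[\avc(G)]_d\to[\avc(G)]_{d+1}$ is then literally the up-map $U_d$. By Theorem~\ref{SLP_Stanley} (here we use $\operatorname{char}\Bbbk=0$), $U_d$ is injective for $d<m$, so these maps are injective.

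For the high degrees $d\geq m$, we use that $\avc(G)$ is a quotient of $B_n$ by the ideal $J(G)B_n$. Let $\pi:B_n\to\avc(G)$ be the graded surjection. Since $\pi$ is a ring map, $\ell\cdot\pi(f)=\pi(\ell f)$, so the image of $\cdot\ell$ on $[\avc(G)]_d$ equals $\pi\bigl(U_d([B_n]_d)\bigr)$. By Theorem~\ref{SLP_Stanley}, $U_d$ is surjective onto $[B_n]_{d+1}$ for $d\geq m$. Since $\pi$ is surjective in degree $d+1$, the map $\cdot\ell:[\avc(G)]_d\to[\avc(G)]_{d+1}$ is surjective for every $d\geq m$. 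Together with the low-degree case, $\ell$ has maximal rank in every degree, which gives the WLP.

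I do not expect a real obstacle here. The only delicate point is checking that the transition degree works. The map $[\avc(G)]_{m-1}\to[\avc(G)]_m$ needs $[\avc(G)]_m=[B_n]_m$, and this is exactly where the strict inequality $\tau(G)>n/2$, i.e.\ $\tau(G)\geq m+1$, is used. With only $\tau(G)=m$, degree $m$ could be a proper quotient, and injectivity from degree $m-1$ could fail. This is the borderline case the paper treats separately.
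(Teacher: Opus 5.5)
Your proposal is correct and follows essentially the same argument as the paper. Both use $\tau(G)\geq \lfloor n/2\rfloor+1$ to identify $[\avc(G)]_d$ with $[B_n]_d$ for $d\leq \lfloor n/2\rfloor$, get injectivity below the middle degree from Theorem~\ref{SLP_Stanley}, and get surjectivity from the middle degree on by pushing the surjective up-map through the quotient $B_n\to\avc(G)$.
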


\begin{proof}
Let $B_n=R/\left(x_1^2,\ldots,x_n^2\right)$ be  the Boolean algebra. 
Then, $\avc(G)=\frac{B_n}{\overline{J(G)}},$ where \(\overline{J(G)}\) is the image of \(J(G)\) in \(B_n\).
Since every minimal generator of \(J(G)\) has degree at least \(\tau(G)\), and since $\tau(G)>\frac n2,$  we get
$[\overline{J(G)}]_i=0$ for every $i\leq \left\lfloor \frac n2\right\rfloor.$
Hence $[\avc(G)]_i=[B_n]_i$ for all $i\leq \left\lfloor \frac n2\right\rfloor.$

By Theorem~\ref{SLP_Stanley}, $B_n$ has the strong Lefschetz property. In particular, the map
\[
U_i:[B_n]_i\longrightarrow [B_n]_{i+1}
\]
is injective for $i<\left\lfloor \frac n2\right\rfloor$
and is surjective for $i\geq \left\lfloor \frac n2\right\rfloor.$

Set $m=\left\lfloor \frac n2\right\rfloor.$
If \(i<m\), then \(i+1\leq m\). It follows that $[\avc(G)]_i=[B_n]_i$ and $[\avc(G)]_{i+1}=[B_n]_{i+1}.$
Therefore the multiplication map
\[
\cdot\ell:[\avc(G)]_i\longrightarrow [\avc(G)]_{i+1}
\]
is identified with the up-map
\[
U_i:[B_n]_i\longrightarrow [B_n]_{i+1},
\]
which is injective.

If \(i\geq m\), then the map $U_i:[B_n]_i\longrightarrow [B_n]_{i+1}$
is surjective. Passing to the quotient by \(\overline{J(G)}\), it follows that
\[
\cdot \ell:[\avc(G)]_i\longrightarrow [\avc(G)]_{i+1}
\]
is also surjective.

Thus multiplication by \(\ell\) has maximal rank in every degree. Hence $\avc(G)$
has the weak Lefschetz property.
\end{proof}

\begin{remark}
The same argument does not prove the strong Lefschetz property.

Indeed, it proves only the following partial facts. We always have that
the maps
\[
\cdot \ell^s:[\avc(G)]_i\longrightarrow [\avc(G)]_{i+s}
\]
are injective if $i+s\leq \left\lfloor \frac n2\right\rfloor,$
and are surjective if $i\geq \left\lfloor \frac n2\right\rfloor.$
However, the maps crossing the middle degree are not controlled by this argument.
In fact, the strong Lefschetz property may fail, in general.
\end{remark}

\begin{example}
Let $G=K_4\sqcup K_2$ be the graph on the vertex set $V(G)=\{1,2,3,4,5,6\}$, where $\{1,2,3,4\}$ spans a complete graph $K_4$, and $\{5,6\}$ is an additional edge.
Then $\tau(G)=4 >3=\frac n2.$
So, by Theorem~\ref{WLPtaularge}, \(\avc(G)\) has the weak Lefschetz property.

We show that \(\avc(G)\) does not have the strong Lefschetz property. The minimal vertex covers of \(G\) are obtained by taking three vertices from $\{1,2,3,4\}$ and one vertex from $\{5,6\}.$
Hence \(J(G)\) is generated by the eight monomials $x_ix_jx_kx_s,$
where $\{i,j,k\}\subseteq \{1,2,3,4\}$ and $s\in\{5,6\}.$
Therefore, a direct computation with Macaulay2~\cite{GS} gives
\[
H_{\avc(G)}(t)=1+6t+15t^2+20t^3+7t^4.
\]

It follows that if $\avc(G)$ had the SLP, then, for a general linear form $L\in [\avc(G)]_1$, the map
\[
\cdot L^3:[\avc(G)]_1\longrightarrow [\avc(G)]_4
\]
would have maximal rank. Since $\dim_{\Bbbk}[\avc(G)]_1=6$ and $\dim_{\Bbbk}[\avc(G)]_4=7,$
this map would have to be injective.

Let $L=a_1x_1+\cdots+a_6x_6\in [\avc(G)]_1$ be a general linear form. Since $L$ is general, we may assume that $a_1a_2\cdots a_6\neq 0$. Then $u=a_6x_6-a_5x_5$
is a nonzero element of $[\avc(G)]_1$. We claim that $L^3u=0$ in $\avc(G)$.

Indeed, all terms of $a_6L^3x_6$ whose support contains three variables among $x_1,x_2,x_3,x_4$ are multiples of a generator of $J(G)$, and hence
vanish in $\avc(G)$. Similarly, all terms of $a_5L^3x_5$ whose support contains three variables among $x_1,x_2,x_3,x_4$ vanish in $\avc(G)$.
The remaining possible terms have support of the form $\{x_i,x_j,x_5,x_6\},$ with  $1\leq i<j\leq 4.$
For such a monomial, the coefficient in $a_6L^3x_6$ is $6a_ia_ja_5a_6,$
while the coefficient in $a_5L^3x_5$ is also $6a_ia_ja_5a_6.$
Since
\[
L^3u=L^3(a_6x_6-a_5x_5)=a_6L^3x_6-a_5L^3x_5,
\]
these two contributions cancel. Therefore $L^3u=0$ in $\avc(G)$. It follows that the map
\[
\cdot L^3:[\avc(G)]_1\longrightarrow [\avc(G)]_4
\]
has a nonzero kernel. Hence it is not injective. Consequently, $\avc(G)$ does not
have the SLP.
\end{example}

The condition $\tau(G)>\frac{n}{2}$ seems restrictive, but in fact is often satisfied in many random graph models. We shall take a few examples.

\begin{remark}\label{rem:Erdos-Renyi}
    Let $G_{n,p}$ denote a graph with vertex set $[n]$ where any two vertices have probability $p$ to be adjacent, where $p\in [0,1]$ is a constant. This is the classic Erd\H{o}s-R\'enyi random graph model \cite{ER59}. Matula \cite{Mat70,Mat72} and independently Bollob\'as and P. Erd\H{o}s \cite{BE76} showed that for most $n$, we have
    \[
    \alpha(G_{n,p})\approx 2\log_{1/(1-p)} n<\frac{n}{2} \text{ with high probability (whp).}
    \]
    Thus 
    \[
    \tau(G_{n,p})=n-\alpha(G_{n,p}) >\frac{n}{2} \text{ whp}.
    \]
    Consequently, $G_{n,p}$ has the WLP whp by Theorem~\ref{WLPtaularge}.
\end{remark}

\begin{remark}\label{rem2:Erdos-Renyi}
    In Remark~\ref{rem:Erdos-Renyi}, $G_{n,p}$ is a \emph{labeled} graph, in the sense that up to isomorphisms, the Erd\H{o}s-R\'enyi random graph model attributes different probability to different graphs. A simpler model is $\mathcal{G}_{n}$, the random graph model that assigns the same probability to the class of isomorphisms of graphs on vertex set $[n]$. In other words, $\mathcal{G}_n$ is the \emph{unlabeled} version of $G_{n,1/2}$. Then since for most $n$, the graph $G_{n,1/2}$ has the WLP whp, the same holds for $\mathcal{G}_n$ (\cite[p. 1462]{Bab95}).
\end{remark}

Let $K_n$ be the complete graph on $[n]$, where $n\geq 2$. A subset $C\subseteq [n]$ is a vertex cover of $K_n$ if and only if its complement has cardinality at most $1$. Therefore $\tau(K_n)=n-1$. Since $n-1>n/2$ for every $n\geq 3$, Theorem~\ref{WLPtaularge} implies that $\avc(K_n)$ has the WLP. In fact, we have the following stronger result.

\begin{proposition}\label{prop:K_n}
The algebra $\avc(K_n)$ has the SLP.
\end{proposition}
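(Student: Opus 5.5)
The plan is to identify $\avc(K_n)$ with a truncation of the Boolean algebra $B_n$ and then transfer the SLP of $B_n$ (Theorem~\ref{SLP_Stanley}) to it. A subset $C\subseteq[n]$ is a vertex cover of $K_n$ if and only if $|C|\geq n-1$. So by Proposition~\ref{prop:HilbertseriesofG}, $\overline{J(K_n)}\subseteq B_n$ is spanned by all squarefree monomials of degree $n-1$ and $n$. Hence
\[
[\avc(K_n)]_d=[B_n]_d \quad\text{for } d\leq n-2,\qquad [\avc(K_n)]_d=0 \quad\text{for } d\geq n-1 .
\]
In other words, $\avc(K_n)=B_n/\mathfrak m^{n-1}B_n$, with Hilbert function $\binom nd$ for $d\le n-2$. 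For $n=2$ this algebra is just $\Bbbk$, and the statement is trivial.

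By Theorem~\ref{Theorem2.5}, it suffices to check that $\ell=x_1+\cdots+x_n$ is a strong Lefschetz element. Fix $i\geq 0$ and $q\geq 1$, and split into two cases.
\begin{enumerate}
\item If $i+q\geq n-1$, then $[\avc(K_n)]_{i+q}=0$. The map $\cdot\ell^q$ is then trivially surjective.
\item If $i+q\leq n-2$, both source and target coincide with the corresponding graded pieces of $B_n$. The quotient map $B_n\to\avc(K_n)$ is an isomorphism in degrees $\le n-2$ and commutes with multiplication by $\ell$. So $\cdot\ell^q:[\avc(K_n)]_i\to[\avc(K_n)]_{i+q}$ is identified with $\cdot\ell^q:[B_n]_i\to[B_n]_{i+q}$. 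This map has maximal rank because $B_n$ has the SLP (Theorem~\ref{SLP_Stanley}, in characteristic zero), with $\ell$ a strong Lefschetz element for $B_n$ by Theorem~\ref{Theorem2.5}.
\end{enumerate}

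The only point requiring care is the identification in the second case, namely that no element of $\overline{J(K_n)}$ interferes with the map. This holds because $\overline{J(K_n)}$ lives entirely in degrees $\ge n-1$, so it does not meet the source or the target. There is no real obstacle here: unlike the situation in the Remark following Theorem~\ref{WLPtaularge}, there are no maps "crossing the middle" that land in a degree where the quotient is a proper nonzero quotient of $[B_n]_{i+q}$. Every target degree is either untouched or zero.
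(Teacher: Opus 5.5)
Your proof is correct and is essentially the paper's argument: identify $\avc(K_n)$ with $B_n$ in degrees $\le n-2$ and with $0$ in degrees $\ge n-1$, then split on whether $i+q\le n-2$ (maximal rank inherited from the SLP of $B_n$) or $i+q\ge n-1$ (target zero, so the map is trivially surjective). Your explicit appeal to Theorem~\ref{Theorem2.5} to conclude that $\ell$ itself is a strong Lefschetz element of $B_n$ is a small point the paper leaves implicit.
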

\begin{proof}
Let $B_n=R/(x_1^2,\ldots,x_n^2)$. Since the minimal vertex covers of $K_n$ are precisely the subsets of $[n]$ of cardinality $n-1$, we have
$J(K_n)=(x_1\cdots \widehat{x_i}\cdots x_n \mid 1\leq i\leq n)$.
Hence $\avc(K_n)$ agrees with $B_n$ in all degrees at most $n-2$, and $[\avc(K_n)]_d=0$ for all $d\geq n-1$.
	
Let $k\geq 0$ and $i\geq 1$. We consider the multiplication map
\[\cdot \ell^i:[\avc(K_n)]_k\longrightarrow [\avc(K_n)]_{k+i}.\]
	
If $k+i\leq n-2$, then both graded components of $\avc(K_n)$ coincide with the corresponding graded components of $B_n$. Therefore the above map is identified with
$\cdot \ell^i: [B_n]_k\longrightarrow [B_n]_{k+i}$.
By Theorem~\ref{SLP_Stanley}, $B_n$ has the SLP in characteristic zero. Hence this map has maximal rank.
	
If $k+i\geq n-1$, then $[\avc(K_n)]_{k+i}=0$. Thus the map
\[
\cdot \ell^i:[\avc(K_n)]_k\longrightarrow [\avc(K_n)]_{k+i}
\]
is automatically surjective, and hence has maximal rank.
Therefore multiplication by every power of $\ell$ has maximal rank in every degree. Thus $\avc(K_n)$ has the SLP, as desired.
\end{proof}

We now consider the borderline case where the vertex cover number is equal to $\lfloor n/2\rfloor$.

\begin{proposition}\label{pro:critical_map}
Assume that  $\tau(G)=m=\lfloor n/2\rfloor$. Then $\avc(G)$ has the WLP if and only if the multiplication map
$\cdot \ell : [\avc(G)]_{m-1}\longrightarrow [\avc(G)]_m$
has maximal rank.
\end{proposition}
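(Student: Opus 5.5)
By Theorem~\ref{Theorem2.5}, $\avc(G)$ has the WLP if and only if $\ell=x_1+\cdots+x_n$ is a weak Lefschetz element. So it suffices to show that, when $\tau(G)=m$, multiplication by $\ell$ has maximal rank in every degree $i\neq m-1$. The statement then follows at once. I would split into the degrees $i<m-1$ and $i\geq m$, exactly as in the proof of Theorem~\ref{WLPtaularge}, with the single degree $i=m-1$ left over.

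\emph{Degrees $i\le m-2$.} Every minimal generator of $J(G)$ has degree at least $\tau(G)=m$. Hence $[\overline{J(G)}]_j=0$ for all $j\le m-1$, and so $[\avc(G)]_j=[B_n]_j$ for $j\le m-1$. For $i\le m-2$ both source and target lie in degrees at most $m-1$. The map $\cdot\ell:[\avc(G)]_i\to[\avc(G)]_{i+1}$ is therefore identified with the up-map $U_i$, which is injective by Theorem~\ref{SLP_Stanley} since $i<m$.

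\emph{Degrees $i\ge m$.} By Theorem~\ref{SLP_Stanley}, $U_i:[B_n]_i\to[B_n]_{i+1}$ is surjective. The map $\cdot\ell$ on $\avc(G)=B_n/\overline{J(G)}$ is induced from $U_i$ via the surjections $[B_n]_j\to[\avc(G)]_j$. Its image thus contains the image of $U_i$ modulo $\overline{J(G)}$, which is all of $[\avc(G)]_{i+1}$, so it is surjective.

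Combining the two cases, every map except possibly $\cdot\ell:[\avc(G)]_{m-1}\to[\avc(G)]_m$ has maximal rank. Hence $\ell$ is a weak Lefschetz element if and only if this remaining map has maximal rank, and Theorem~\ref{Theorem2.5} converts this into the stated equivalence for the WLP. No step is a real obstacle. The only point needing care is the bookkeeping: the identification with $B_n$ must hold in degree $i+1\le m-1$ and fails exactly when $i+1=m$. That is precisely why the degree $m-1\to m$ map is the single critical one.
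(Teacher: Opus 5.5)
Your proposal is correct and follows essentially the same route as the paper. You identify the maps in degrees $i\le m-2$ with the injective Boolean up-maps, obtain surjectivity in degrees $i\ge m$ by passing to the quotient, and isolate the single map in degree $m-1\to m$; your explicit use of Theorem~\ref{Theorem2.5} for both directions of the equivalence spells out a step the paper leaves implicit.
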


\begin{proof}
As in the proof of Theorem 3.1, set $B_n=R/(x_1^2,\ldots,x_n^2)$. Since $\tau(G)=m$, the ideal $J(G)$ has no nonzero component in degrees less than $m$. Hence $[\avc(G)]_i=[B_n]_i$ for every $i<m$.
	
By Theorem~\ref{SLP_Stanley}, $B_n$ has the SLP in characteristic zero. In particular, the up-maps $U_i:[B_n]_i\longrightarrow [B_n]_{i+1}$ are injective for $i<m$ and surjective for $i\geq m$.
	
Let $i\leq m-2$. Then $[\avc(G)]_i=[B_n]_i$ and $[\avc(G)]_{i+1}=[B_n]_{i+1}$. Thus the map 
\[
\cdot \ell:[\avc(G)]_i\longrightarrow [\avc(G)]_{i+1}
\]
is identified with the up-map $U_i:[B_n]_i\longrightarrow [B_n]_{i+1}$, and hence is injective. Therefore it has maximal rank for all $i\leq m-2$.
	
Now let $i\geq m$. Since $U_i:[B_n]_i\longrightarrow [B_n]_{i+1}$ is surjective, the induced map on the quotient
\[
\cdot \ell:[\avc(G)]_i\longrightarrow [\avc(G)]_{i+1}
\]
is also surjective. Hence it has maximal rank for all $i\geq m$.
	
Thus all Lefschetz maps of $\avc(G)$ have maximal rank except possibly the single map
\[
\cdot \ell:[\avc(G)]_{m-1}\longrightarrow [\avc(G)]_m.
\]
Consequently, $\ell$ is a weak Lefschetz element for $\avc(G)$ if and only if this middle map has maximal rank, as desired.
\end{proof}

\begin{remark}
In the previous proposition, the phrase ``maximal rank'' cannot be replaced by ``injective''. Indeed, for the critical map $\cdot \ell:[\avc(G)]_{m-1}\longrightarrow [\avc(G)]_m$, the expected maximal rank behavior depends on the two dimensions $\dim_\Bbbk[\avc(G)]_{m-1}$ and $\dim_\Bbbk[\avc(G)]_m$. Since $\tau(G)=m$, $\dim_\Bbbk[\avc(G)]_{m-1}=\binom{n}{m-1}$. On the other hand, one has 
\[
\dim_\Bbbk[\avc(G)]_m=\binom{n}{m}-\#\{C\subseteq V\mid C \text{ is a minimal vertex cover of }G\text{ and } |C|=m\}.
\]
Therefore it may happen that $\dim_\Bbbk[\avc(G)]_{m-1}>\dim_\Bbbk[\avc(G)]_m$. In this case the critical map cannot be injective, and maximal rank means surjectivity. 
\end{remark}
\begin{example}\label{exam:pathP4} 
Let $P_4$ be the path on $[4].$ Then the Hilbert series of $\avc(P_4)$ is
	\[
	H_{\avc(P_4)}(t)=1+4t+3t^2.
	\]
Set $\ell=x_1+x_2+x_3+x_4$. The multiplication map
\[
\cdot \ell:[\avc(P_4)]_1\longrightarrow [\avc(P_4)]_2
\]
is surjective. Indeed, with respect to the bases of the source and the target
\[
[\avc(P_4)]_1=
\langle x_1,x_2,x_3,x_4\rangle\quad\text{and}\quad
[\avc(P_4)]_2=
\langle x_1x_2,x_1x_4,x_3x_4\rangle,
\]
the multiplication map by $\ell$ is
\[
\begin{pmatrix}
1&1&0&0\\
1&0&0&1\\
0&0&1&1
\end{pmatrix}.
\]
This matrix has rank $3$, so $\cdot\ell$ is surjective. Hence $\avc(P_4)$ has WLP in any characteristic of $\Bbbk$.
\end{example}

\begin{corollary}
Let $G$ be a graph with vertex set $[n]$ and set $m=\lfloor n/2\rfloor$.  Assume that $\tau(G)=m$.  Let
\[
M_G=\Span_{\Bbbk}\{x_C:C\subseteq [n],\ |C|=m,\ C\text{ is a vertex cover of }G\}\subseteq [B_n]_m.
\]
Let $\pi_G:[B_n]_m\to [B_n]_m/M_G$ be the quotient map.  Then $\avc(G)$ has the WLP if and only if
$\ell=\pi_G\circ U_{m-1}:[B_n]_{m-1}\longrightarrow [B_n]_m/M_G$
has maximal rank. More precisely, one has
\begin{enumerate}[label=\textup{(\arabic*)}]
	\item If
	$\binom n{m-1}\leq \binom nm-c_m(G),$
	then $\avc(G)$ has the WLP if and only if
	$\im U_{m-1}\cap M_G=0.$
	\item If
	$\binom n{m-1}\geq \binom nm-c_m(G),$
	then $\avc(G)$ has the WLP if and only if
	$\im U_{m-1}+M_G=[B_n]_m.$
\end{enumerate}
\end{corollary}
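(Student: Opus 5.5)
The plan is to reduce everything to Proposition~\ref{pro:critical_map} and then unwind the critical map in linear-algebra terms. Since $\tau(G)=m$, there are no vertex covers of size less than $m$. So $[\avc(G)]_{m-1}=[B_n]_{m-1}$, and the image of $J(G)$ in degree $m$ is spanned by the monomials $x_C$ with $|C|=m$ and $C$ a vertex cover. That image is exactly $M_G$, so $[\avc(G)]_m=[B_n]_m/M_G$. Multiplication by $\ell$ from degree $m-1$ to degree $m$ in $\avc(G)$ is therefore the composite $\pi_G\circ U_{m-1}$: first multiply in $B_n$, then reduce modulo $J(G)$. Proposition~\ref{pro:critical_map} then gives the first assertion, namely that $\avc(G)$ has the WLP if and only if $\pi_G\circ U_{m-1}$ has maximal rank.

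For the refined statements, I compare dimensions using Proposition~\ref{prop:HilbertseriesofG}. The source has dimension $\binom n{m-1}$ and the target has dimension $\binom nm-c_m(G)$, because $M_G$ has a basis of $c_m(G)$ distinct squarefree monomials.

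In case (1) the source is no bigger than the target, so maximal rank means injectivity. The kernel of $\pi_G\circ U_{m-1}$ is $U_{m-1}^{-1}(M_G)$. By Theorem~\ref{SLP_Stanley}, $U_{m-1}$ is injective because $m-1<m$. Hence this kernel is isomorphic to $\im U_{m-1}\cap M_G$, and injectivity is equivalent to $\im U_{m-1}\cap M_G=0$.

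In case (2) the source is no smaller than the target, so maximal rank means surjectivity. The map $\pi_G\circ U_{m-1}$ is surjective onto $[B_n]_m/M_G$ exactly when $\im U_{m-1}+M_G=[B_n]_m$.

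When equality holds, both cases apply. They are consistent: by the injectivity of $U_{m-1}$ and the dimension count, injectivity and surjectivity of a map between equal-dimensional spaces coincide.

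I see no real obstacle. The only step needing care is checking that the degree-$m$ part of the ideal is exactly $M_G$. This holds because squares vanish in $B_n$, and any squarefree monomial of degree $m$ lying in $J(G)$ is divisible by some $x_C$ with $C$ a vertex cover. Since $|C|\ge \tau(G)=m$, that monomial must equal $x_C$ itself.
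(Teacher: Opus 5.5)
Your proposal is correct and follows essentially the same route as the paper: you identify the critical map with $\pi_G\circ U_{m-1}$ via Proposition~\ref{pro:critical_map}, use the injectivity of $U_{m-1}$ to reduce injectivity to $\im U_{m-1}\cap M_G=0$, and read surjectivity off the image $(\im U_{m-1}+M_G)/M_G$. Beyond what the paper writes, you also verify that the degree-$m$ part of the ideal is exactly $M_G$ and that the two criteria agree in the equal-dimension case, both of which the paper leaves implicit.
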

\begin{proof}
	The map $q_G\circ U_{m-1}$ has kernel
	$\Ker(q_G\circ U_{m-1})=U_{m-1}^{-1}(\im U_{m-1}\cap M_G).$
	In the first case, maximal rank means injectivity.  Since $U_{m-1}$ is injective in the Boolean algebra, injectivity of $q_G\circ U_{m-1}$ is equivalent to $\im U_{m-1}\cap M_G=0$.
	
	In the second case, maximal rank means surjectivity.  The image of $q_G\circ U_{m-1}$ is
	$q_G(\im U_{m-1})=(\im U_{m-1}+M_G)/M_G.$
	This equals $[B_n]_m/M_G$ if and only if $\im U_{m-1}+M_G=[B_n]_m$.
\end{proof}

If $n=2m$ is even, then
$\binom{2m}{m}-\binom{2m}{m-1}=\frac{1}{m+1}\binom{2m}{m}.$
We denote this Catalan number by
$\Cat_m=\frac{1}{m+1}\binom{2m}{m}.$

\begin{corollary}\label{cor:even_borderline_cat}
Let $G$ be a graph with vertex set $[2m]$ with $\tau(G)=m$.  Then
\[
\dim[\avc(G)]_m-\dim[\avc(G)]_{m-1}=\Cat_m-c_m(G).
\]
	In particular, if $c_m(G)\leq \Cat_m$, then the WLP is equivalent to
	$\im U_{m-1}\cap M_G=0.$
	If $c_m(G)\geq \Cat_m$, then the WLP is equivalent to
	$\im U_{m-1}+M_G=[B_{2m}]_m.$
\end{corollary}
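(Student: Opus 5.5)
Since $\tau(G)=m$, no vertex cover has fewer than $m$ elements, so $c_{m-1}(G)=0$. By Proposition~\ref{prop:HilbertseriesofG}, $\dim[\avc(G)]_{m-1}=\binom{2m}{m-1}$ and $\dim[\avc(G)]_m=\binom{2m}{m}-c_m(G)$. The difference of these two dimensions is therefore $\binom{2m}{m}-\binom{2m}{m-1}-c_m(G)$. It remains to check the Catalan identity
\[
\binom{2m}{m}-\binom{2m}{m-1}=\binom{2m}{m}\Bigl(1-\frac{m}{m+1}\Bigr)=\frac{1}{m+1}\binom{2m}{m}=\Cat_m,
\]
which uses $\binom{2m}{m-1}=\frac{m}{m+1}\binom{2m}{m}$. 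This gives the displayed formula $\dim[\avc(G)]_m-\dim[\avc(G)]_{m-1}=\Cat_m-c_m(G)$.

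For the two "in particular" assertions, I will apply the preceding corollary with $n=2m$, so that $\lfloor n/2\rfloor=m$. The formula above shows that $c_m(G)\le\Cat_m$ holds exactly when $\binom{n}{m-1}\le\binom nm-c_m(G)$. In that case part (1) of the corollary says the WLP is equivalent to $\im U_{m-1}\cap M_G=0$. Likewise, $c_m(G)\ge\Cat_m$ holds exactly when $\binom n{m-1}\ge\binom nm-c_m(G)$, and part (2) says the WLP is equivalent to $\im U_{m-1}+M_G=[B_{2m}]_m$.

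When $c_m(G)=\Cat_m$ both conditions apply, and they are consistent. In that case the critical map goes between spaces of equal dimension, so injectivity and surjectivity coincide.

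Nothing here is a real obstacle. The only care needed is the remark that $c_{m-1}(G)=0$, which follows from $\tau(G)=m$, and the binomial identity above.
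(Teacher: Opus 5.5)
Your proof is correct and follows the same route as the paper, which gives no separate proof here. The paper relies on three ingredients, all of which you use: the dimension count from Proposition~\ref{prop:HilbertseriesofG} with $c_{m-1}(G)=0$, the Catalan identity $\binom{2m}{m}-\binom{2m}{m-1}=\Cat_m$ recorded just before the statement, and parts (1) and (2) of the preceding corollary applied with $n=2m$.
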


\begin{lemma}\label{lem:two_block_kernel}
Let $V=O\sqcup E$ be a disjoint union with $|O|=|E|=m$.  For $r=0,1,\ldots,m-1$, set
\[
\Sigma_r=\sum_{\substack{|A|=m-1\\ |A\cap O|=r}}x_A.
\]
Thus $\Sigma_r$ is the sum of all squarefree monomials of degree $m-1$ involving exactly $r$ variables from $O=\{1,3,\ldots,2m-1\}$. 
Define
\[
f=\sum_{r=0}^{m-1}\frac{(-1)^{m-1-r}}{m\binom{m-1}{r}}\Sigma_r.
\]
Then
$U_{m-1}(f)=x_O+(-1)^{m-1}x_E$ in $[B_{2m}]_m$.
\end{lemma}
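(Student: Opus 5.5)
Since $f$ is symmetric under permutations of $O$ and of $E$ separately, the computation of $U_{m-1}(f)$ reduces to finding the coefficient of a single monomial $x_B$ with $|B|=m$, and that coefficient depends only on $s=|B\cap O|\in\{0,\ldots,m\}$. Recall that $U_{m-1}(x_A)=\sum_{i\notin A}x_{A\cup\{i\}}$. So the coefficient of $x_B$ in $U_{m-1}(f)$ is the sum of the coefficients of $f$ at the $(m-1)$-subsets $A\subset B$, each of which is $B$ with one element removed.

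Fix $B$ with $|B\cap O|=s$ and $|B\cap E|=m-s$.
\begin{itemize}
\item Removing one of the $s$ elements of $B\cap O$ gives $A$ with $|A\cap O|=s-1$. There are $s$ such sets $A$.
\item Removing one of the $m-s$ elements of $B\cap E$ gives $A$ with $|A\cap O|=s$. There are $m-s$ such sets $A$.
\end{itemize}
Writing $c_r=\frac{(-1)^{m-1-r}}{m\binom{m-1}{r}}$ for the coefficient of $\Sigma_r$ (with $c_{-1}=c_m=0$), the coefficient of $x_B$ is
\[
\gamma_s=s\,c_{s-1}+(m-s)\,c_s .
\]

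We now evaluate $\gamma_s$ in three cases.

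For $1\le s\le m-1$, we use the identities $s\binom{m-1}{s}=(m-s)\binom{m-1}{s-1}$, i.e. $\frac{s}{\binom{m-1}{s-1}}=\frac{m-s}{\binom{m-1}{s}}$. This gives
\[
\gamma_s=\frac{1}{m}\left(\frac{s(-1)^{m-s}}{\binom{m-1}{s-1}}+\frac{(m-s)(-1)^{m-1-s}}{\binom{m-1}{s}}\right)=\frac{(-1)^{m-s}}{m}\cdot\frac{m-s}{\binom{m-1}{s}}\,(1-1)=0 .
\]

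For the endpoints:
\begin{itemize}
\item $s=m$, i.e. $B=O$: $\gamma_m=m\,c_{m-1}=m\cdot\frac{1}{m}=1$.
\item $s=0$, i.e. $B=E$: $\gamma_0=m\,c_0=m\cdot\frac{(-1)^{m-1}}{m}=(-1)^{m-1}$.
\end{itemize}
Therefore $U_{m-1}(f)=x_O+(-1)^{m-1}x_E$.

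The only delicate step is the sign and binomial bookkeeping in the cancellation for middle $s$. The identity $s\binom{m-1}{s}=(m-s)\binom{m-1}{s-1}$ handles it: both sides equal $\frac{(m-1)!}{(s-1)!\,(m-1-s)!}$. The identification $O=\{1,3,\ldots,2m-1\}$ plays no role; any partition of $V$ into two $m$-sets works.
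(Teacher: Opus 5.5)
Your proposal is correct and follows essentially the same route as the paper: you compute the coefficient of each $x_B$ as $s\,c_{s-1}+(m-s)\,c_s$ with $s=|B\cap O|$, kill the middle cases $1\le s\le m-1$ via $\frac{s}{\binom{m-1}{s-1}}=\frac{m-s}{\binom{m-1}{s}}$, and evaluate the endpoints $B=O$ and $B=E$ directly. Your remark that the specific labelling $O=\{1,3,\ldots,2m-1\}$ is irrelevant is also accurate.
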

\begin{proof}
Write 
\[
a_r=\frac{(-1)^{m-1-r}}{m\binom{m-1}{r}}\quad \text{and hence}\quad
f=\sum_{r=0}^{m-1}a_r\Sigma_r \in [B_{2m}]_{m-1}.\] 

Let $T\subset [2m]$ with $|T|=m$, and put $r=|T\cap O|$. We compute the coefficient of $x_T$ in $U_{m-1}(f)$. The monomial $x_T$ can arise from $x_{T\setminus\{i\}}$ for some $i\in T$. If $i\in O$, then $T\setminus\{i\}$ contains $r-1$ elements from $O$, and there are $r$ such choices of $i$. If $i\notin O$, then $T\setminus\{i\}$ contains $r$ elements from $O$, and there are $m-r$ such choices of $i$. Hence the coefficient of $x_T$ in $U_{m-1}(f)$ is
\[
r a_{r-1}+(m-r)a_r.
\]We claim that this number is zero for all $1\le r\le m-1$. Indeed, for all $1\le r\le m-1$, one has
\[
a_{r-1}
=
\frac{(-1)^{m-r}}{m\binom{m-1}{r-1}}
\quad
\text{and} \quad
a_r
=
\frac{(-1)^{m-1-r}}{m\binom{m-1}{r}}.
\]
Hence
\begin{align*}
r a_{r-1}+(m-r)a_r&=
\frac{r(-1)^{m-r}}{m\binom{m-1}{r-1}} + \frac{(m-r)(-1)^{m-1-r}}{m\binom{m-1}{r}}\\
&= 	\frac{r}{m\binom{m-1}{r-1}} \big((-1)^{m-r}+(-1)^{m-1-r}\big)\\
&=0,
\end{align*}
since
\[
\binom{m-1}{r}
=
\frac{m-r}{r}\binom{m-1}{r-1}, \quad\text{and hence}\quad \frac{r}{\binom{m-1}{r-1}}
=
\frac{m-r}{\binom{m-1}{r}}.
\]

It remains to consider the two boundary cases. If $r=m$, then $T=O$, and the coefficient of $x_O$ in $U_{m-1}(f)$ is
\[
m a_{m-1}
=
m\cdot \frac{1}{m\binom{m-1}{m-1}}
=
1.
\]
If $r=0$, then $T=E=[2m]\setminus O$, and the coefficient of $x_E$ in $U_{m-1}(f)$ is
\[
m a_0
=
m\cdot \frac{(-1)^{m-1}}{m\binom{m-1}{0}}
=
(-1)^{m-1}.
\]
Consequently,
\[
U_{m-1}(f)=x_O+(-1)^{m-1}x_E,
\]
as claimed.
\end{proof}

\begin{proposition}\label{prop:complementary_failure}
Let $G$ be a graph with vertex set $[2m]$ such that $\tau(G)=m$.  Assume that $G$ has two complementary minimum vertex covers $O$ and $E$, that is,
\[
O\cap E=\emptyset,
\qquad
O\cup E=[2m],
\qquad
|O|=|E|=m.
\]
If
$c_m(G)\leq \Cat_m,$
then $\avc(G)$ does not have the WLP.
\end{proposition}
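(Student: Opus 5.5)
By Corollary~\ref{cor:even_borderline_cat}, the hypothesis $c_m(G)\leq \Cat_m$ reduces the statement to a linear-algebra question. There the WLP holds if and only if $\im U_{m-1}\cap M_G=0$. So it suffices to exhibit a single nonzero element of $\im U_{m-1}$ lying in $M_G$. The natural candidate comes from Lemma~\ref{lem:two_block_kernel}, applied with the partition $[2m]=O\sqcup E$ given by the two complementary minimum vertex covers. (The lemma only uses that $|O|=|E|=m$ and $O\cap E=\emptyset$; the labelling $O=\{1,3,\dots\}$ is a relabelling and does not matter.)

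The steps are as follows.

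First, define $f\in[B_{2m}]_{m-1}$ as in Lemma~\ref{lem:two_block_kernel}, using the blocks $O$ and $E$. The lemma then gives
\[
U_{m-1}(f)=x_O+(-1)^{m-1}x_E .
\]

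Second, $O$ and $E$ are vertex covers of cardinality $m$, so $x_O,x_E\in M_G$. Hence $U_{m-1}(f)\in M_G$.

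Third, $U_{m-1}(f)\neq 0$, because $O\neq E$ and the two monomials are distinct basis elements of $[B_{2m}]_m$. Therefore $\im U_{m-1}\cap M_G\neq 0$.

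Equivalently, $f\neq 0$ lies in the kernel of $\cdot\ell:[\avc(G)]_{m-1}\to[\avc(G)]_m$, using $[\avc(G)]_{m-1}=[B_{2m}]_{m-1}$ since $\tau(G)=m$. The source dimension $\binom{2m}{m-1}$ is at most the target dimension, so maximal rank would mean injectivity, and this fails. By Proposition~\ref{pro:critical_map}, $\ell$ is not a weak Lefschetz element. By Theorem~\ref{Theorem2.5}, $\avc(G)$ then fails the WLP.

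Essentially no obstacle remains, since the needed identity is exactly Lemma~\ref{lem:two_block_kernel}. The only points needing care are the standing assumption $\operatorname{char}\Bbbk=0$, which the lemma needs to divide by $m\binom{m-1}{r}$, and confirming the direction of maximal rank via the dimension count $\dim[\avc(G)]_m-\dim[\avc(G)]_{m-1}=\Cat_m-c_m(G)\geq 0$.
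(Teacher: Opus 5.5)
Your proposal is correct and is essentially the paper's own argument. You use Corollary~\ref{cor:even_borderline_cat} to reduce maximal rank of the critical map to injectivity, then apply Lemma~\ref{lem:two_block_kernel} to the partition $[2m]=O\sqcup E$ to get a nonzero $f\in[B_{2m}]_{m-1}=[\avc(G)]_{m-1}$ with $\ell f=x_O+(-1)^{m-1}x_E=0$ in $\avc(G)$; your remark that the lemma needs only $|O|=|E|=m$ and $O\cap E=\emptyset$, not the specific odd/even labelling, correctly clarifies how the paper applies it.
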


\begin{proof}
	By Corollary~\ref{cor:even_borderline_cat}, the assumption $c_m(G)\leq \Cat_m$ means that maximal rank for the critical map is equivalent to injectivity.  Hence it is enough to find a nonzero element in the kernel of
	$\cdot\ell:[\avc(G)]_{m-1}\longrightarrow [\avc(G)]_m.$
	Since $m-1<\tau(G)$, no squarefree monomial of degree $m-1$ is killed by $J(G)$.  Thus $[\avc(G)]_{m-1}=[B_{2m}]_{m-1}$.
	
	Apply Lemma~\ref{lem:two_block_kernel} to the decomposition $[2m]=O\sqcup E$.  We get a nonzero element $f\in [B_{2m}]_{m-1}$ such that
	$U_{m-1}(f)=x_O+(-1)^{m-1}x_E.$
	Since $O$ and $E$ are vertex covers of cardinality $m$, both monomials $x_O$ and $x_E$ vanish in $\avc(G)$.  Therefore
	$\ell f=0$ in $[\avc(G)]_m.$
	On the other hand, $f\ne 0$ in $[\avc(G)]_{m-1}$, because no monomial of degree $m-1$ is killed.  Hence the critical map is not injective, and so it does not have maximal rank.  Therefore $\avc(G)$ does not have the WLP.
\end{proof}

\begin{remark}\label{rem:complementary_condition_essential}
The complementary assumption in Proposition~\ref{prop:complementary_failure}
is essential. More precisely, it cannot be replaced by the weaker assumption that
$G$ has two distinct minimum vertex covers of size $m$.
	
Indeed, the proof of Proposition~\ref{prop:complementary_failure} uses the
decomposition $V(G)=O\sqcup E$ in an essential way. This decomposition allows us
to group the squarefree monomials of degree $m-1$ according to the number of
variables coming from $O$, and then to construct an element $f\in [B_{2m}]_{m-1}$
such that
	\[
	U_{m-1}(f)=x_O+(-1)^{m-1}x_E.
	\]
	If $O$ and $E$ are merely two minimum vertex covers and are not complementary,
	then they do not define a partition of $V(G)$, and the above construction no
	longer works.
	
	Thus having two minimum vertex covers is not enough; the proof of
	Proposition~\ref{prop:complementary_failure} genuinely requires the partition
	$V(G)=O\sqcup E$.
\end{remark}
\begin{example}
Let $G$ be the graph on $[4]$ with edge set
$\{\{1,2\},\{1,3\},\{1,4\},\{2,3\}\}.$
Then $\tau(G)=2$, and the minimum vertex covers are
$\{1,2\}$ and $ \{1,3\}.$
They are not complementary, since they have nonempty intersection.  The critical map
\[\\
\cdot\ell:[\avc(G)]_1\longrightarrow [\avc(G)]_2
\]
has the matrix
\[
\begin{pmatrix}
	1&0&0&1\\
	0&1&1&0\\
	0&1&0&1\\
	0&0&1&1
\end{pmatrix}
\]
with respect to the bases
$[\avc(G)]_1=\langle x_1,x_2,x_3,x_4\rangle$
and
$[\avc(G)]_2=\langle x_1x_4,x_2x_3,x_2x_4,x_3x_4\rangle.$
Its determinant is $-2$, which is nonzero in characteristic zero.  Hence the critical map is an isomorphism, and $\avc(G)$ has the WLP. 
\end{example}

We now record a graph-theoretic characterization of the complementary-cover
condition. In particular, this proposition explains exactly when the hypothesis
of Proposition~\ref{prop:complementary_failure} can be recognized from the
graph structure.

\begin{proposition}\label{prop:complementary_characterization}
	Let $G$ be a graph with vertex set $[2m]$. The following conditions are equivalent.
	\begin{enumerate}[label=\textup{(\arabic*)}]
		\item There exist two minimum vertex covers $O$ and $E$ of $G$ such that
		$O\cap E=\emptyset$, $O\cup E=V(G)$, and $|O|=|E|=m$.
		
		\item The graph $G$ is bipartite and has a perfect matching.
		
		\item There exists a partition $[2m]=X\sqcup Y$ such that $X$ and $Y$ are
		independent sets of $G$, $|X|=|Y|=m$, and, after writing
		$X=\{x_1,\ldots,x_m\}$ and $Y=\{y_1,\ldots,y_m\}$, one has
		$\{x_i,y_i\}\in E(G)$ for every $i=1,\ldots,m$.
	\end{enumerate}
\end{proposition}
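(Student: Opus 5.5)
I would prove the cycle of implications (1)$\Rightarrow$(2)$\Rightarrow$(3)$\Rightarrow$(1). Throughout, the only facts I need are two. First, a set is a vertex cover if and only if its complement is independent. Second, König's observation that the endpoints of a matching must be covered by distinct vertices, so any matching of size $k$ forces $\tau(G)\ge k$.

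\emph{(1)$\Rightarrow$(2).} Since $O$ and $E$ are complementary vertex covers, $E=[2m]\setminus O$ is independent, being the complement of the cover $O$. Likewise $O$ is independent. So $O\sqcup E$ is a bipartition and $G$ is bipartite. For a perfect matching, I would use Hall's theorem on $O$. Take $S\subseteq O$, and suppose $|N(S)|<|S|$, where $N(S)\subseteq E$ since $O$ is independent. Then $C=(O\setminus S)\cup N(S)$ is a vertex cover: any edge has one endpoint in $O$; if that endpoint lies in $S$, the other endpoint lies in $N(S)$. But $|C|<m=\tau(G)$, a contradiction. Hence Hall's condition holds and there is a matching saturating $O$. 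Since $|O|=|E|=m$, this matching is perfect.

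\emph{(2)$\Rightarrow$(3).} Let $M$ be a perfect matching, so $|M|=m$, and fix a bipartition $V=X\sqcup Y$ into independent sets. Each edge of $M$ has exactly one endpoint in $X$ and one in $Y$. Hence $|X|=|Y|=m$, and labeling the edges of $M$ as $\{x_i,y_i\}$ gives the required indexing. If $G$ is disconnected, I would just note that the bipartition is chosen arbitrarily on each component; this is harmless, since the matching edges still cross the parts.

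\emph{(3)$\Rightarrow$(1).} Set $O=X$ and $E=Y$. Since $Y$ is independent, its complement $X$ is a vertex cover, and symmetrically $Y$ is a vertex cover. The $m$ disjoint edges $\{x_i,y_i\}$ form a matching, so every vertex cover has size at least $m$. Therefore $\tau(G)=m$ and both $X$ and $Y$ are minimum vertex covers. They are complementary by construction.

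The only non-formal step is the perfect matching in (1)$\Rightarrow$(2). The Hall argument above handles it directly; alternatively, one can invoke König's theorem for bipartite graphs, which gives that the maximum matching size equals $\tau(G)=m$. In either form, the subtle point is that the minimality $\tau(G)=m$ is exactly what is needed, and that it enters only through the contradiction $|C|<m$.
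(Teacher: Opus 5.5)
Your proof is correct and follows the same cycle (1)$\Rightarrow$(2)$\Rightarrow$(3)$\Rightarrow$(1) as the paper, and your arguments for the last two implications are the same as the paper's. The only difference is in (1)$\Rightarrow$(2): the paper cites K\"onig's theorem ($\nu(G)=\tau(G)=m$), while you check Hall's condition directly using the vertex cover $(O\setminus S)\cup N(S)$. That is the standard derivation of K\"onig's theorem from Hall's, so it is a self-contained version of the same step.
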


\begin{proof}
	We first prove that \textup{(1)} implies \textup{(2)}. Since $O$ is a vertex
	cover, its complement $E=[2m]\setminus O$ is an independent set. Similarly,
	since $E$ is a vertex cover, its complement $O$ is an independent set. Hence
	$G$ is bipartite with bipartition $[2m]=O\sqcup E$. Moreover, since $O$ and
	$E$ are minimum vertex covers of size $m$, we have $\tau(G)=m$. By K\"onig's
	theorem, because $G$ is bipartite, the matching number of $G$ satisfies
	$\nu(G)=\tau(G)=m$. Thus $G$ has a matching with $m$ edges. Since $G$ has
	$2m$ vertices, such a matching saturates all vertices. Hence $G$ has a perfect
	matching.
	
	Next, \textup{(2)} implies \textup{(3)}. Since $G$ is bipartite, there exists
	a bipartition $[2m]=X\sqcup Y$ such that $X$ and $Y$ are independent sets. Let
	$M$ be a perfect matching of $G$. Every edge of $M$ has one endpoint in $X$ and
	one endpoint in $Y$. Since $M$ saturates all $2m$ vertices, it contains exactly
	$m$ edges. Hence $|X|=|Y|=m$. After relabeling the vertices of $X$ and $Y$, we
	may write $X=\{x_1,\ldots,x_m\}$ and $Y=\{y_1,\ldots,y_m\}$ in such a way that
	$M=\{\{x_i,y_i\}:1\leq i\leq m\}$. Therefore $\{x_i,y_i\}\in E(G)$ for every
	$i=1,\ldots,m$.
	
	Finally, we prove that \textup{(3)} implies \textup{(1)}. Since $Y$ is
	independent, its complement $X$ is a vertex cover of $G$. Similarly, since $X$
	is independent, its complement $Y$ is a vertex cover of $G$. Thus $X$ and $Y$
	are two complementary vertex covers of size $m$. Moreover, the edges
	$\{x_1,y_1\},\ldots,\{x_m,y_m\}$ form a matching of size $m$. Every vertex cover
	must contain at least one endpoint of each edge in this matching, and therefore
	every vertex cover has size at least $m$. Hence $\tau(G)\geq m$. Since $X$ and
	$Y$ are vertex covers of size $m$, we also have $\tau(G)\leq m$. Thus
	$\tau(G)=m$, and $X$ and $Y$ are minimum vertex covers. Therefore condition
	\textup{(1)} holds, with $O=X$ and $E=Y$.
\end{proof}

\section{The WLP for Path Graphs}\label{sec:paths}
Let $P_n$ be the path graph on the vertex set $[n]=\{1,\ldots,n\}$, $n\ge 2$ and the edge set 
\[
\{\{i,i+1\}\mid 1\leq i\leq n-1\}.
\]
 Since $\tau(P_n)=\lfloor n/2\rfloor$, path graphs $P_n$ lie in the borderline case $\tau(G)=\lfloor n/2\rfloor$ studied in the previous section.

The minimum vertex covers control the first degree in which the cover ideal removes monomials.
\begin{lemma}\label{lem:minmumvertexcover}
	The following hold.
\begin{enumerate}
		\item If $n=2m+1$, then $P_{2m+1}$ has a unique minimum vertex cover, namely
		$E=\{2,4,\ldots,2m\}$.
		\item If $n=2m$, then the minimum vertex covers of $P_{2m}$ are
		$C_j=\{2,4,\ldots,2j\}\cup\{2j+1,2j+3,\ldots,2m-1\}$,
		where $0\leq j\leq m$. In particular, $P_{2m}$ has exactly $m+1$ minimum vertex covers.
	\end{enumerate}
\end{lemma}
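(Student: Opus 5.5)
The plan is to use the edge matching $\{1,2\},\{3,4\},\ldots$ to get the lower bound $\tau(P_n)\ge\lfloor n/2\rfloor$, and then read off the structure of any cover of that size.

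\textbf{Lower bound.} For $n=2m$ or $n=2m+1$, the edges $\{2i-1,2i\}$ with $1\le i\le m$ are pairwise disjoint. Every vertex cover must contain at least one endpoint of each of them, so $\tau(P_n)\ge m$.

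\textbf{Structure of a minimum cover.} Let $C$ be a vertex cover with $|C|=m$. Then $C$ contains exactly one endpoint of each matching edge $\{2i-1,2i\}$. If $n=2m+1$, then $C$ also contains no other vertex, in particular not vertex $2m+1$.

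\textbf{Odd case $n=2m+1$.} Since $2m+1\notin C$ and the edge $\{2m,2m+1\}$ must be covered, we get $2m\in C$, and hence $2m-1\notin C$. The edge $\{2m-2,2m-1\}$ then forces $2m-2\in C$. Inducting downward gives $C=\{2,4,\ldots,2m\}=E$. This set is indeed a cover, since every edge $\{i,i+1\}$ has an even endpoint, and $|E|=m$. So $E$ is the unique minimum vertex cover.

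\textbf{Even case $n=2m$.} Encode $C$ by its choices $c_i\in\{2i-1,2i\}$ for $i=1,\ldots,m$. The only edges not in the matching are $\{2i,2i+1\}$ for $1\le i\le m-1$. Such an edge is uncovered exactly when $c_i=2i-1$ and $c_{i+1}=2i+2$, that is, when an odd choice is followed by an even choice. So $C$ is a cover if and only if the sequence of choices has the form "even" for $i\le j$ and "odd" for $i>j$, for some $0\le j\le m$. This is exactly the set $C_j$.

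The $m+1$ sets $C_j$ are distinct, since they differ in which blocks take the even choice. Each $C_j$ is indeed a cover by the criterion above, so there are exactly $m+1$ minimum covers. As a by-product, this also confirms $\tau(P_n)=\lfloor n/2\rfloor$.

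\textbf{Main care point.} The main point needing care is the even case, namely checking that "no odd-then-even pattern" is equivalent to the monotone form. This holds because a binary sequence avoiding the consecutive pattern (odd, even) must consist of evens followed by odds.
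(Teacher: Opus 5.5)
Your proof is correct, but it takes a different route from the paper. The paper does not argue structurally. It cites the independence polynomial of paths \cite{NT24b}, whose leading coefficient shows that $P_n$ has exactly $\binom{n+1-\lfloor (n+1)/2\rfloor}{\lfloor (n+1)/2\rfloor}$ maximum independent sets. Passing to complements gives that many minimum vertex covers, each of size $n-\lfloor (n+1)/2\rfloor=m$, and the count evaluates to $1$ for $n=2m+1$ and $m+1$ for $n=2m$. The paper then only checks that the listed sets are vertex covers of size $m$, so completeness follows by matching the count; this implicitly uses that the $C_j$ are distinct. You instead use the matching $\{2i-1,2i\}$ to force exactly one endpoint per block. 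You then derive the shape of every cover of size $m$ directly: by downward forcing from the unused vertex $2m+1$ in the odd case, and by the ``no odd-then-even transition'' criterion in the even case. This is essentially the device the paper itself uses later for $C_{2m}$ in Proposition~\ref{prop:even_cycle_min_covers}. The paper's version is shorter, given the citation, and reuses the same enumeration that computes the Hilbert series in Proposition~\ref{prop:HilbertseriesPath}. Yours is self-contained and obtains $\tau(P_n)=\lfloor n/2\rfloor$ along the way without any enumerative input. It also shows directly that every minimum cover is one of the listed sets, so it does not depend on a counting argument.
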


\begin{proof}
    Since the complements of maximal independence sets are exactly minimal vertex covers, the amount of the latter andd their size can be read off the independence polynomial. From the description of the independence polynomial of paths (\cite[Proposition~3.1]{NT24b}), $P_n$ has $\binom{n+1-\lfloor \frac{n+1}{2} \rfloor}{\lfloor \frac{n+1}{2} \rfloor}$ minimal vertex covers of size $n-\lfloor \frac{n+1}{2} \rfloor$. We have
    \[
    \binom{n+1-\lfloor \frac{n+1}{2} \rfloor}{\lfloor \frac{n+1}{2} \rfloor} = \begin{cases}
        1&\text{if } n=2m+1,\\
        m+1&\text{if } n=2m.
    \end{cases}  \text{ and }  n-\lfloor \frac{n+1}{2} \rfloor =\begin{cases}
        m&\text{if } n=2m+1,\\
        m&\text{if } n=2m.
    \end{cases}
    \]
    It is routine to verify that the sets given in the statement of this lemma are vertex covers of size $m$. This concludes the proof.
\end{proof}

\begin{proposition}\label{prop:HilbertseriesPath}
For every $n\geq 2$, one has
\[
H_{\avc(P_n)}(t)=\sum_{d=0}^n \left(\binom{n}{d}-\binom{d+1}{n-d}\right)t^d.
\]
Equivalently, for every $d\geq 0$, one has
$\dim_{\Bbbk}[\avc(P_n)]_d=\binom{n}{d}-\binom{d+1}{n-d}$, where we use the convention that $\binom{a}{b}=0$ if $b<0$ or $b>a$. Furthermore, the Hilbert series $H_{\avc(P_n)}(t)$ is unimodal and that its mode is either $m$ or $m-1$. 
\end{proposition}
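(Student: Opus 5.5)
By Proposition~\ref{prop:HilbertseriesofG}, $\dim_{\Bbbk}[\avc(P_n)]_d=\binom{n}{d}-c_d(P_n)$, and $c_d(P_n)=s_{n-d}(P_n)$, the number of independent sets of size $n-d$ in $P_n$. It therefore suffices to show that
\[
s_k(P_n)=\binom{n-k+1}{k}
\]
for every $k$. With $k=n-d$ this gives $\binom{d+1}{n-d}$, as required. The convention $\binom{a}{b}=0$ for $b<0$ or $b>a$ is consistent with this: the count vanishes exactly when $k>\lceil n/2\rceil$, and $c_d=0$ when $d>n$.

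To count $k$-subsets $\{i_1<\dots<i_k\}\subseteq[n]$ with no two consecutive elements, I would use the standard bijection $i_j\mapsto i_j-(j-1)$. It sends such subsets to arbitrary $k$-subsets of $[n-k+1]$, and the inverse shift reinserts the gaps. Alternatively, one can cite \cite[Proposition~3.1]{NT24b}, as in the proof of Lemma~\ref{lem:minmumvertexcover}. Either route gives the displayed formula for $H_{\avc(P_n)}(t)$ directly.

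For unimodality, set $m=\lfloor n/2\rfloor$. By Lemma~\ref{lem:minmumvertexcover}, or directly from the formula, $\tau(P_n)=m$. Corollary~\ref{cor:unimodal_tau=n/2} then says the Hilbert series is unimodal with mode $m$ or $m-1$. As a sanity check on the formula, note that $\binom{d+1}{n-d}=0$ whenever $d+1<n-d$, i.e.\ $d<(n-1)/2$, which holds for all $d<m$, in agreement with $\tau(P_n)=m$.

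The only step needing care is the binomial identity together with the index bookkeeping at the boundary cases. In particular, for $d=n$ the formula gives $\binom{n+1}{0}=1$, matching $c_n(P_n)=1$ and $h_n=0$. Everything else follows from results already stated in the paper.
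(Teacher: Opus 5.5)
Your proof is correct and follows the paper's argument. Like the paper, you use Proposition~\ref{prop:HilbertseriesofG} to reduce to $s_{n-d}(P_n)=\binom{d+1}{n-d}$, and you get unimodality with mode $m$ or $m-1$ from Corollary~\ref{cor:unimodal_tau=n/2} using $\tau(P_n)=\lfloor n/2\rfloor$. The only difference is that the paper cites \cite[Proposition~3.1]{NT24b} for the count of independent sets, whereas your bijection $i_j\mapsto i_j-(j-1)$ onto the $k$-subsets of $[n-k+1]$ proves that step directly.
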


\begin{proof}	
By Proposition~\ref{prop:HilbertseriesofG}, one has
$\dim_{\Bbbk}[\avc(P_n)]_d=\binom{n}{d}-c_d(P_n)=\binom{n}{d}-s_{n-d}(P_n)$,
where $s_{n-d}(P_n)$ denotes the number of independent sets of $P_n$ of cardinality $n-d$. 	By \cite[Proposition~3.1]{NT24b}, $s_{n-d}(P_n)=\binom{d+1}{n-d}$.

The second assertion follows from Corollary~\ref{cor:unimodal_tau=n/2}  because $\tau(P_n)=\lfloor n/2\rfloor$. 
The result follows.
\end{proof}

We now study the multiplication map which controls the WLP of $\avc(P_n)$. Since $\tau(P_n)=m=\lfloor n/2\rfloor$, Proposition~\ref{pro:critical_map} shows that it is enough to consider the middle map
\[
\cdot\ell:[\avc(P_n)]_{m-1}\longrightarrow[\avc(P_n)]_m.
\]

We first treat odd paths. Let $n=2m+1$. Since $\tau(P_{2m+1})=m$, no squarefree monomial of degree $m-1$ is killed by $J(P_{2m+1})$. Hence $[\avc(P_{2m+1})]_{m-1}=[B_{2m+1}]_{m-1}$. In degree $m$, since $P_{2m+1}$ has a unique minimum vertex cover $E=\{2,4,\ldots,2m\}$,  the only squarefree monomial killed by $J(P_{2m+1})$ is $x_E=x_2x_4\cdots x_{2m}$. Therefore $[\avc(P_{2m+1})]_m=[B_{2m+1}]_m/\Bbbk x_E$.
Thus the critical multiplication map by $\ell$ is the composition
\[
[\avc(P_{2m+1})]_{m-1}=[B_{2m+1}]_{m-1}
\xrightarrow{\ U_{m-1}\ }
[B_{2m+1}]_m
\longrightarrow
[B_{2m+1}]_m/\Bbbk x_E=[\avc(P_{2m+1})]_m.
\]
Since $h_m=\binom{2m+1}{m}-1$ and $h_{m-1}=\binom{2m+1}{m-1}$, one has
\[
h_m-h_{m-1}=\binom{2m+1}{m}-\binom{2m+1}{m-1}-1=\frac{2}{m+2}\binom{2m+1}{m}-1>0,
\]
for all $m\ge 1.$ Thus $h_{m-1}<h_m$. Therefore the critical multiplication map by $\ell$ is maximal rank is equivalent to injectivity.
\begin{lemma}\label{lem:x_EnotbelongU_m-1}
With the notation above, one has $x_E\notin \operatorname{Im} U_{m-1}$.
\end{lemma}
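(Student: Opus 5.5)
The plan is to use the duality between the up-map and the down-map with respect to the monomial inner product on $B_{2m+1}$. Since $\langle U_{m-1}(f),g\rangle=\langle f,D_m(g)\rangle$ for all $f\in[B_{2m+1}]_{m-1}$ and $g\in[B_{2m+1}]_m$, every element of $\im U_{m-1}$ is orthogonal to $\Ker D_m$. It therefore suffices to exhibit some $g\in[B_{2m+1}]_m$ with $D_m(g)=0$ and $\langle x_E,g\rangle\neq 0$.

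Indeed, if $x_E=U_{m-1}(f)$, then
\[
\langle x_E,g\rangle=\langle U_{m-1}(f),g\rangle=\langle f,D_m(g)\rangle=0,
\]
which is a contradiction.

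\textbf{Construction of $g$.} We use the matching $\{1,2\},\{3,4\},\ldots,\{2m-1,2m\}$ of $P_{2m+1}$, which pairs each element $2i\in E$ with $2i-1$ and leaves the vertex $2m+1$ unused. Set
\[
g=\prod_{i=1}^{m}(x_{2i}-x_{2i-1})\in[B_{2m+1}]_m.
\]
The factors involve pairwise disjoint sets of variables, so expanding the product produces squarefree monomials only. There is no cancellation and no squares appear. Exactly one term, namely $x_2x_4\cdots x_{2m}$, has support $E$, and it comes with coefficient $1$. Hence $\langle x_E,g\rangle=1$.

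\textbf{Checking $D_m(g)=0$.} This is the only step needing care. Recall that $D=\partial/\partial x_1+\cdots+\partial/\partial x_{2m+1}$ on squarefree monomials. On a product of squarefree polynomials in pairwise disjoint sets of variables, $D$ satisfies the Leibniz rule, because each partial derivative does. Therefore
\[
D(g)=\sum_{i=1}^m D(x_{2i}-x_{2i-1})\prod_{j\neq i}(x_{2j}-x_{2j-1}).
\]
Each summand vanishes, since $D(x_{2i}-x_{2i-1})=1-1=0$.

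Combining the two facts with the duality argument shows $x_E\notin\im U_{m-1}$. The argument does not even use the characteristic-zero hypothesis. The main point to verify carefully is the Leibniz rule for $D$ on disjoint-support squarefree products. One can check it monomial by monomial: for disjoint $A$ and $B$, $D(x_{A\cup B})=D(x_A)x_B+x_AD(x_B)$.
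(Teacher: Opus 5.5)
Your proof is correct and is essentially the paper's argument: the paper also uses the adjointness $\ker D=(\operatorname{Im}U_{m-1})^\perp$ and the test element $g=(x_1-x_2)(x_3-x_4)\cdots(x_{2m-1}-x_{2m})$, and your sign convention simply makes $\langle x_E,g\rangle=1$ rather than $(-1)^m$. Your explicit Leibniz-rule check for $D$ on disjoint-support squarefree products makes precise the paper's terse ``applied factor by factor'' step.
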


\begin{proof}
Let $D=\partial/\partial x_1+\cdots+\partial/\partial x_{2m+1}$. The restriction $D:[B_{2m+1}]_m\to [B_{2m+1}]_{m-1}$ is adjoint to the up-map $U_{m-1}:[B_{2m+1}]_{m-1}\to [B_{2m+1}]_m$ with respect to the monomial inner product. Hence $\ker D=(\operatorname{Im} U_{m-1})^\perp$ in degree $m$.
	
Consider the element
$g=(x_1-x_2)(x_3-x_4)\cdots(x_{2m-1}-x_{2m})\in [B_{2m+1}]_m$.
For each $1\leq j\leq m$, we have
	\[
	\left(\frac{\partial}{\partial x_{2j-1}}+\frac{\partial}{\partial x_{2j}}\right)(x_{2j-1}-x_{2j})=0.
	\]
	Moreover, $g$ does not involve $x_{2m+1}$. Since $D$ is the sum of these partial derivatives, applied factor by factor, it follows that $D(g)=0$. Thus $g\in\ker D=(\operatorname{Im} U_{m-1})^\perp$.
	
	On the other hand, the coefficient of $x_E=x_2x_4\cdots x_{2m}$ in $g$ is $(-1)^m$. Hence $\langle x_E,g\rangle=(-1)^m\neq 0$. Therefore $x_E$ is not orthogonal to $g$, whereas every element of $\operatorname{Im} U_{m-1}$ is orthogonal to $g$. Consequently, $x_E\notin\operatorname{Im} U_{m-1}$.
\end{proof}

\begin{proposition}\label{pro:odd_path_WLP}
For every $m\geq 1$, the algebra $\avc(P_{2m+1})$ has the WLP.
\end{proposition}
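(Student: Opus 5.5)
By Proposition~\ref{pro:critical_map}, applied with $n=2m+1$ and $\tau(P_{2m+1})=m=\lfloor n/2\rfloor$, it suffices to show that the single critical map
\[
\cdot\ell:[\avc(P_{2m+1})]_{m-1}\longrightarrow[\avc(P_{2m+1})]_m
\]
has maximal rank. The discussion preceding Lemma~\ref{lem:x_EnotbelongU_m-1} identifies this map with the composition $\pi\circ U_{m-1}$, where $\pi:[B_{2m+1}]_m\to[B_{2m+1}]_m/\Bbbk x_E$ is the quotient map. It also shows that $h_{m-1}<h_m$, so maximal rank here means injectivity. The plan is therefore to prove that $\pi\circ U_{m-1}$ is injective.

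Let $f\in[B_{2m+1}]_{m-1}$ with $\pi(U_{m-1}(f))=0$. Then $U_{m-1}(f)\in\Bbbk x_E$, so $U_{m-1}(f)=c\,x_E$ for some scalar $c$. If $c\neq 0$, then $x_E=U_{m-1}(c^{-1}f)\in\im U_{m-1}$, which contradicts Lemma~\ref{lem:x_EnotbelongU_m-1}. Hence $c=0$, that is, $U_{m-1}(f)=0$.

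By Theorem~\ref{SLP_Stanley} (characteristic zero), $U_{m-1}$ is injective, since $m-1<m=\lfloor (2m+1)/2\rfloor$. Thus $f=0$, and the kernel of $\pi\circ U_{m-1}$ is trivial. In other words, $\ker(\pi\circ U_{m-1})=U_{m-1}^{-1}(\im U_{m-1}\cap\Bbbk x_E)=0$.

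The critical map is therefore injective, so it has maximal rank, and Proposition~\ref{pro:critical_map} gives the WLP for $\avc(P_{2m+1})$. The case $m=1$ ($P_3$) is covered by the same argument: there $[\avc]_0=\Bbbk$ and the map $1\mapsto\ell$ is nonzero modulo $x_2$. The real content of the proof is Lemma~\ref{lem:x_EnotbelongU_m-1}; given it, the remaining steps are formal.
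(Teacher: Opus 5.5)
Your proof is correct and follows the paper's argument essentially step for step. You reduce to injectivity of the critical map $\pi\circ U_{m-1}$, write $U_{m-1}(f)=c\,x_E$, use Lemma~\ref{lem:x_EnotbelongU_m-1} to force $c=0$, and finish with the injectivity of $U_{m-1}$ from Theorem~\ref{SLP_Stanley}; your separate check of $m=1$ is harmless but unnecessary, since the general argument (with $g=x_1-x_2$ in the lemma) already covers it.
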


\begin{proof}
It is enough to prove that the critical map
$\cdot\ell:[\avc(P_{2m+1})]_{m-1}\to[\avc(P_{2m+1})]_m$ is injective.
	
Let $f\in[\avc(P_{2m+1})]_{m-1}$ and suppose that $\ell f=0$ in $[\avc(P_{2m+1})]_m$. Since 
\[
[\avc(P_{2m+1})]_{m-1}=[B_{2m+1}]_{m-1},
\]
we may regard $f$ as an element of $[B_{2m+1}]_{m-1}$. The equality $\ell f=0$ in $[\avc(P_{2m+1})]_m$ means that $U_{m-1}(f)$ is zero in $[B_{2m+1}]_m/\Bbbk x_E$. Hence there exists $\lambda\in\Bbbk$ such that $U_{m-1}(f)=\lambda x_E$ in $[B_{2m+1}]_m$.
	
Since $U_{m-1}(f)\in\operatorname{Im} U_{m-1}$ and $x_E\notin\operatorname{Im} U_{m-1}$ by Lemma~\ref{lem:x_EnotbelongU_m-1}, we must have $\lambda=0$. Thus $U_{m-1}(f)=0$. By Theorem~\ref{SLP_Stanley}, the map $U_{m-1}:[B_{2m+1}]_{m-1}\to[B_{2m+1}]_m$ is injective. Hence $f=0$.
	
Therefore the critical map $\cdot\ell:[\avc(P_{2m+1})]_{m-1}\to[\avc(P_{2m+1})]_m$ is injective, and hence has maximal rank. This completes the proof.
\end{proof}

\begin{proposition}\label{pro:even_path_no_WLP}
For every $m\geq 3$, the algebra $\avc(P_{2m})$ does not have the WLP.
\end{proposition}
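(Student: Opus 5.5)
The plan is to reduce the statement directly to Proposition~\ref{prop:complementary_failure}. We take $G=P_{2m}$, so $n=2m$ and $\tau(P_{2m})=m$. By Lemma~\ref{lem:minmumvertexcover}(2), the minimum vertex covers of $P_{2m}$ are the sets $C_j$ with $0\le j\le m$. Two of them are complementary:
\[
C_0=\{1,3,\ldots,2m-1\}=O,\qquad C_m=\{2,4,\ldots,2m\}=E .
\]
These satisfy $O\cap E=\emptyset$, $O\cup E=[2m]$ and $|O|=|E|=m$. This can also be seen from Proposition~\ref{prop:complementary_characterization}, since $P_{2m}$ is bipartite with the perfect matching $\{1,2\},\{3,4\},\ldots,\{2m-1,2m\}$. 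So the structural hypothesis of Proposition~\ref{prop:complementary_failure} holds.

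It remains to check the numerical hypothesis $c_m(P_{2m})\le \Cat_m$. By Lemma~\ref{lem:minmumvertexcover}(2), $c_m(P_{2m})=m+1$. The claim is that $m+1\le \Cat_m$ for all $m\ge 3$, which I would prove by induction on $m$:
\begin{itemize}
\item[(a)] \emph{Base case.} $\Cat_3=5\ge 4$.
\item[(b)] \emph{Inductive step.} Use the recurrence $\Cat_{m+1}=\frac{2(2m+1)}{m+2}\Cat_m$. For $m\ge 3$ the ratio satisfies $\frac{2(2m+1)}{m+2}\ge 2$, since $4m+2\ge 2m+4$ whenever $m\ge 1$. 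Hence $\Cat_{m+1}\ge 2\Cat_m\ge 2(m+1)\ge m+2$.
\end{itemize}
This step is also where the bound $m\ge 3$ enters. For $m=2$ we have $c_2=3>\Cat_2=2$, and this matches the fact that $P_4$ has the WLP (Example~\ref{exam:pathP4}).

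With both hypotheses verified, Proposition~\ref{prop:complementary_failure} gives the result. Explicitly, the element $f$ of Lemma~\ref{lem:two_block_kernel} is a nonzero element of $[\avc(P_{2m})]_{m-1}$ with $\ell f=x_O\pm x_E=0$. Since $h_{m-1}\le h_m$ by Corollary~\ref{cor:even_borderline_cat}, the critical map has to be injective to have maximal rank, so this kernel element shows it fails. By Proposition~\ref{pro:critical_map}, $\avc(P_{2m})$ therefore fails the WLP.

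I expect no real obstacle. The only point needing care is the inequality $m+1\le\Cat_m$ and its base case, since it determines exactly where the threshold $m\ge 3$ comes from.
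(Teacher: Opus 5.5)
Your proposal is correct and follows the paper's proof exactly: you take the complementary minimum vertex covers $O=\{1,3,\ldots,2m-1\}$ and $E=\{2,4,\ldots,2m\}$, use $c_m(P_{2m})=m+1\le \Cat_m$ for $m\ge 3$, and apply Proposition~\ref{prop:complementary_failure}. The one addition is that you prove the inequality $m+1\le\Cat_m$ by induction via the ratio $\Cat_{m+1}/\Cat_m=\frac{2(2m+1)}{m+2}\ge 2$, which the paper simply asserts.
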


\begin{proof}
Let
\[
O=\{1,3,\ldots,2m-1\},
\qquad
E=\{2,4,\ldots,2m\}.
\]
These are complementary minimum vertex covers of $P_{2m}$.  Moreover $c_m(P_{2m})=m+1$.  Since $m+1\leq \Cat_m$ for $m\geq 3$, Proposition~\ref{prop:complementary_failure} implies that $\avc(P_{2m})$ does not have the WLP.
This completes the proof.
\end{proof}
Combining the odd and even cases, together with the small cases already discussed, we obtain the complete classification of the WLP for vertex cover Artinian algebras of path graphs.

\begin{theorem}\label{thm:path_classifition}
The algebra $\avc(P_n)$ has the WLP if and only if $n$ is odd or $n\leq 5$.
\end{theorem}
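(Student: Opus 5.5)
The classification splits according to the parity of $n$ and the size of $n$, and every case except two small even paths is already covered by earlier results.

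\emph{Odd $n$.} Write $n=2m+1$. For $m\ge 1$, Proposition~\ref{pro:odd_path_WLP} gives that $\avc(P_n)$ has the WLP.

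\emph{Even $n\ge 6$.} Write $n=2m$ with $m\ge 3$. Here Proposition~\ref{pro:even_path_no_WLP} shows that the WLP fails.

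\emph{Remaining cases $n=2$ and $n=4$.}

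For $n=2$ we have $\tau(P_2)=1=m$. The critical map is
\[
\cdot\ell:[\avc(P_2)]_0\to[\avc(P_2)]_1 .
\]
The only vertex covers of size $1$ are $\{1\}$ and $\{2\}$, so $[\avc(P_2)]_1=0$. Hence $\avc(P_2)=\Bbbk$, and the map is trivially surjective. Alternatively, this follows from Proposition~\ref{prop:K_n}, since $P_2=K_2$.

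For $n=4$, Example~\ref{exam:pathP4} shows that the critical map $[\avc(P_4)]_1\to[\avc(P_4)]_2$ is surjective. By Proposition~\ref{pro:critical_map}, $\avc(P_4)$ therefore has the WLP.

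\emph{Conclusion.} Combining the cases, $\avc(P_n)$ has the WLP exactly when $n$ is odd or $n\in\{2,4\}$. Since $\{2,4\}$ are precisely the even integers $\le 5$ with $n\ge 2$, this is the same as saying $n$ is odd or $n\le 5$.

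There is no real obstacle. The only care needed is to check that the small even cases $n=2,4$ are handled correctly, including the degenerate target space for $n=2$.
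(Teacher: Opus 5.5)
Your proposal is correct and follows the paper's proof essentially verbatim. You use Proposition~\ref{pro:odd_path_WLP} for odd $n$, Proposition~\ref{pro:even_path_no_WLP} for even $n\ge 6$, and handle the small cases via $\avc(P_2)\cong\Bbbk$ and Example~\ref{exam:pathP4}; your appeal to Proposition~\ref{pro:critical_map} for $n=4$ simply makes explicit why checking the single middle map suffices.
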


	

\begin{proof}
    If $n$ is odd, then $\avc(P_n)$ has the WLP by Proposition~\ref{pro:odd_path_WLP}. If $n=2$, then $\avc(P_2)\cong \Bbbk$, and hence $\avc(P_2)$ has the WLP. If $n=4$, then $\avc(P_4)$ has the WLP by Example~\ref{exam:pathP4}. Thus $\avc(P_n)$ has the WLP whenever $n$ is odd or $n\leq 5$. On the other hand, if $n=2m$ for some $m\geq 3$, then Proposition~\ref{pro:even_path_no_WLP} shows that $\avc(P_{2m})$ does not have the WLP. This concludes the proof.
\end{proof}

\section{The WLP for Cycle Graphs and Ferrers Graphs}\label{sec:cyclesFerrers}

We first start with cycle graphs.

\begin{proposition}\label{prop:even_cycle_min_covers}
	The graph $C_{2m}$ has exactly two minimum vertex covers,
	\[
	O=\{1,3,\ldots,2m-1\},
	\qquad
	E=\{2,4,\ldots,2m\}.
	\]
	In particular, $\tau(C_{2m})=m$.
\end{proposition}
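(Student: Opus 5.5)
The plan is to show three things in turn: $O$ and $E$ are vertex covers, every vertex cover has at least $m$ vertices, and every vertex cover of size exactly $m$ equals $O$ or $E$.

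First, each edge of $C_{2m}$ has the form $\{i,i+1\}$ with indices taken mod $2m$, including the edge $\{2m,1\}$. Since $2m$ is even, the two endpoints of every such edge have opposite parity. Hence every edge has exactly one endpoint in $O$ and one in $E$, so both $O$ and $E$ are vertex covers of size $m$, and $\tau(C_{2m})\le m$.

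For the lower bound, note that the $m$ edges $\{1,2\},\{3,4\},\ldots,\{2m-1,2m\}$ are pairwise disjoint. Every vertex cover must contain at least one endpoint of each of them, so $\tau(C_{2m})\ge m$ and therefore $\tau(C_{2m})=m$.

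For uniqueness, let $C$ be a vertex cover with $|C|=m$. Its complement $I=[2m]\setminus C$ is then an independent set of size $m$. I would argue by counting edges around the cycle. Each vertex of $I$ lies on exactly two edges. Because $I$ is independent, these $2|I|=2m$ incidences land on $2m$ distinct edges, and $C_{2m}$ has only $2m$ edges. So every edge contains exactly one vertex of $I$, which forces $I$ to alternate around the cycle: if $i\in I$ then $i+1\notin I$, and if $i\notin I$ then $i+1\in I$, because the edge $\{i,i+1\}$ must contain exactly one vertex of $I$. Consequently $I$ is determined by whether $1\in I$. If $1\in I$ then $I=O$ and $C=E$; if $1\notin I$ then $I=E$ and $C=O$.

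An alternative route to uniqueness would quote the independence polynomial of cycles, in the same way Lemma~\ref{lem:minmumvertexcover} used the one for paths, but the direct counting above avoids that. The only delicate point in the whole argument is the step showing that an independent set of size $m$ must be alternating, and the incidence count settles it cleanly.
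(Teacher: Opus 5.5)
Your proof is correct. The upper and lower bounds match the paper's: the paper uses the same matching $\{1,2\},\{3,4\},\ldots,\{2m-1,2m\}$ and simply asserts that $O$ and $E$ are covers, whereas you check the wrap-around edge $\{2m,1\}$ explicitly. Your uniqueness argument, however, differs from the paper's.

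The paper stays with the cover $C$ itself. A cover of size $m$ contains exactly one endpoint of each matching edge $\{2i-1,2i\}$. If these choices are not all of the same parity, then going around the cycle some odd choice $2i-1$ is followed by an even choice $2i+2$ (indices mod $2m$), which leaves the edge $\{2i,2i+1\}$ uncovered.

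You instead pass to the complementary independent set $I$ and double-count incidences using $2$-regularity. This forces every edge to meet $I$ exactly once, and hence forces alternation.

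The paper's version reuses the matching from the lower bound, so both halves of the proof rest on one structure. It does rely on a small cyclic observation: a non-constant parity pattern must contain an odd-to-even switch somewhere, including across the wrap-around at $i=m$. Your count avoids that case analysis and is more general. In any $k$-regular graph on $n$ vertices it gives $\alpha(G)\le n/2$, so it would even yield $\tau(C_{2m})\ge m$ without the matching. It also shows that an independent set of size $n/2$ has independent complement, i.e. is one side of a balanced bipartition.

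Both arguments tacitly need $m\ge 2$, so that $C_{2m}$ is a simple $2$-regular graph with $2m$ edges. This is the range in which the paper uses the statement.
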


\begin{proof}
	The edges $\{1,2\},\{3,4\},\ldots,\{2m-1,2m\}$ form a matching of size $m$, so every vertex cover has cardinality at least $m$.  Both $O$ and $E$ are vertex covers of cardinality $m$, hence $\tau(C_{2m})=m$.
	
	Let $C$ be a vertex cover of cardinality $m$.  It must contain exactly one endpoint from each matching edge $\{2i-1,2i\}$.  If the parity of these choices changes while going around the cycle, then for some $i$ one has $2i-1\in C$ and $2i+2\in C$ modulo $2m$, forcing $2i,2i+1\notin C$ and leaving the edge $\{2i,2i+1\}$ uncovered.  Hence all choices have the same parity, giving $O$ or $E$.
\end{proof}

\begin{theorem}\label{thm:cycle_classification}
	For every $n\geq 3$,
	$\avc(C_n)\text{ has the WLP}\quad\Longleftrightarrow\quad n\text{ is odd}.$
\end{theorem}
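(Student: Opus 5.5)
Split according to the parity of $n$.

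\textbf{Odd cycles.} Let $n=2m+1$ with $m\geq 1$. A vertex cover of $C_{2m+1}$ must contain at least one endpoint of each of the $2m+1$ edges. Each vertex covers at most two edges, so every vertex cover has at least $m+1>n/2$ vertices. Hence $\tau(C_{2m+1})=m+1>n/2$, and Theorem~\ref{WLPtaularge} immediately gives that $\avc(C_{2m+1})$ has the WLP.

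\textbf{Even cycles, $m\geq 3$.} Let $n=2m$. By Proposition~\ref{prop:even_cycle_min_covers}, $\tau(C_{2m})=m$ and there are exactly two minimum vertex covers, $O$ and $E$. These are complementary, and $c_m(C_{2m})=2$. Since $\Cat_m\geq 2$ for all $m\geq 2$, Proposition~\ref{prop:complementary_failure} applies and shows that $\avc(C_{2m})$ fails the WLP. This covers all $m\geq 2$, since $n\geq 3$ forces $m\geq 2$.

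\textbf{The case $C_4$.} Here $m=2$ and $\Cat_2=2=c_2(C_4)$, so the hypothesis $c_m\leq\Cat_m$ still holds and the argument above applies. As a sanity check, the element of Lemma~\ref{lem:two_block_kernel} gives an explicit kernel vector. With $O=\{1,3\}$ and $E=\{2,4\}$,
\[
f=\tfrac12\bigl(x_1+x_3-x_2-x_4\bigr),
\]
and
\[
\ell f=\tfrac12\bigl((x_1+x_3)^2-(x_2+x_4)^2\bigr)=x_1x_3-x_2x_4,
\]
which vanishes in $\avc(C_4)$ because $x_1x_3=x_O$ and $x_2x_4=x_E$ both lie in $J(C_4)$. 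In this case $h_1=h_2=4$, so the failure of injectivity is also a failure of surjectivity. Hence the critical map does not have maximal rank, and the WLP fails.

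\textbf{Main obstacle.} There is essentially none: every step reduces to earlier results. The only points needing care are the vertex-cover bound $\tau(C_{2m+1})=m+1$ for odd cycles, and checking that the inequality $c_m(C_{2m})\leq \Cat_m$ holds even in the smallest case $m=2$, where it is an equality.
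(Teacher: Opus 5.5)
Your proof is correct and follows the paper's proof. Odd cycles have $\tau(C_{2m+1})=m+1>n/2$, so Theorem~\ref{WLPtaularge} applies. Even cycles have two complementary minimum vertex covers with $c_m(C_{2m})=2\leq\Cat_m$ for all $m\geq 2$, so Proposition~\ref{prop:complementary_failure} gives failure of the WLP. The edge-counting justification of the lower bound on $\tau$ and the explicit $C_4$ kernel element $\ell f=x_1x_3-x_2x_4$ are correct additions that the paper leaves out. The only fix needed is cosmetic: your heading says ``$m\geq 3$'' but the argument covers $m\geq 2$.
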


\begin{proof}
    If $n=2m$, where $m\geq 2$, is even, then by Proposition~\ref{prop:even_cycle_min_covers}, $C_{2m}$ has two complementary minimum vertex covers $O$ and $E$.  Also $c_m(C_{2m})=2\leq \Cat_m$ for $m\geq 2$.  Thus Proposition~\ref{prop:complementary_failure} applies and gives failure of the WLP. On the other hand, If $n=2m+1$ is odd, then $\tau(C_{2m+1})=m+1>\frac{2m+1}{2}.$ Therefore $A_c(C_{2m+1})$ has the WLP by Theorem~\ref{WLPtaularge}.
\end{proof}

Next we consider the class of bipartite cochordal graphs, also known as \emph{Ferrers graphs}. Recall that a graph is \emph{cochordal} if its complement does not have any induced cycle, except for maybe triangles $C_3$.

\begin{theorem}\label{thm:ferrers_classification}
    Let $G$ be a bipartite cochordal graph with vertex set $[2m]$. Then $A_c(G)$ has the WLP if and only if $G\in \{ K_2,P_4,C_4\}$.
\end{theorem}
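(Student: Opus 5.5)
The plan is to use the Ferrers structure to reduce to the borderline machinery of Section~\ref{sec:largetau}. I would then dispose of the finitely many small cases by direct matrix computations. Write the bipartition as $X=\{u_1,\dots,u_p\}$, $Y=\{v_1,\dots,v_q\}$ with $p+q=2m$. A Ferrers graph is given by a partition $\lambda_1\geq\cdots\geq\lambda_p\geq 1$ with $\lambda_1=q$ and $u_iv_j\in E(G)$ if and only if $j\leq\lambda_i$. Since $G$ is bipartite, $\tau(G)=\nu(G)\leq\min(p,q)\leq m$. So Theorem~\ref{WLPtaularge} never applies, and only two regimes occur: $\tau(G)=m$ and $\tau(G)<m$.

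\textbf{Step 1: the case $\tau(G)=m$.} Here $G$ has a perfect matching, so $p=q=m$ and $\lambda_i\geq m+1-i$. By Proposition~\ref{prop:complementary_characterization}, $X$ and $Y$ are complementary minimum vertex covers. Next I would describe all minimum covers. In a Ferrers graph the neighbourhoods are nested, so every minimal cover has the staircase form $\{u_1,\dots,u_k\}\cup\{v_1,\dots,v_{\lambda_{k+1}}\}$. Hence the covers of size $m$ are among at most $m+1$ such sets, giving $c_m(G)\leq m+1$. For $m\geq 3$ we have $m+1\leq\Cat_m$, and Proposition~\ref{prop:complementary_failure} gives failure of the WLP. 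For $m=1$ the graph is $K_2$ and $\avc(K_2)\cong\Bbbk$. For $m=2$ the perfect-matching Ferrers graphs are $P_4$ (with $\lambda=(2,1)$) and $C_4=K_{2,2}$ (with $\lambda=(2,2)$).
\begin{itemize}
\item $P_4$ is settled by Example~\ref{exam:pathP4}.
\item For $C_4$ I would write out the $4\times 4$ matrix of the critical map $[\avc]_1\to[\avc]_2$ by hand, exactly as in Example~\ref{exam:pathP4}.
\end{itemize}
I must flag a tension in the $C_4$ case. With $O,E$ the two colour classes, the element $f=\sum_{i\in O}x_i-\sum_{j\in E}x_j$ satisfies $\ell f=2x_O-2x_E$, which is $0$ in $\avc(C_4)$. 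This is the same phenomenon behind Theorem~\ref{thm:cycle_classification}. So this step must be checked very carefully against the conventions for $C_4$ and for ``Ferrers graph'' used in the statement; as computed here, it points to failure rather than to the WLP.

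\textbf{Step 2: the case $\tau(G)=t<m$.} This is the main obstacle, since Proposition~\ref{pro:critical_map} is no longer available. Lemma~\ref{lem:two_block_kernel} does not apply directly either, because the covers $X$ and $Y$ now have unequal sizes $p<m<q$. The Hilbert function agrees with $B_{2m}$ up to degree $t-1$, and in degree $t$ it loses exactly the staircase covers of size $t$, of which there are few. So I expect $h_{t-1}<h_t$, and the task is to exhibit a nonzero kernel element of $\cdot\ell:[\avc(G)]_{t-1}\to[\avc(G)]_t$. My first attempt would be a weighted alternating sum in the spirit of Lemma~\ref{lem:two_block_kernel}:
\begin{itemize}
\item group the degree-$(t-1)$ monomials by their intersection pattern with the initial segments $\{u_1,\dots,u_k\}$ and $\{v_1,\dots,v_{\lambda_{k+1}}\}$ defining two distinct size-$t$ staircase covers;
\item solve the resulting triangular recursion for coefficients so that all non-cover monomials cancel in $\ell f$.
\end{itemize}
If two minimum covers are not available, I would instead try to pass to the induced Ferrers subgraph on a perfect-matchable core and lift a kernel element from Step~1.

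The remaining small cases would be handled by direct computation of the critical map; these are $m=2$ without a perfect matching (for instance $K_{1,3}$), and any low-$m$ exceptions from Step~2.
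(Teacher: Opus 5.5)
\textbf{Where the proposal agrees with the paper.} Your Step~1 for $m\geq 3$ is the paper's argument. The paper takes the two complementary minimum covers and the bound $c_m(G)\leq m+1$ from a cited lemma; you derive both from the staircase description of minimal covers. Both arguments then finish with Proposition~\ref{prop:complementary_failure}.

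\textbf{The $C_4$ case.} Your $C_4$ computation is correct, and it is not a matter of conventions. Label the cycle $1,2,3,4$. Then $\avc(C_4)=\Bbbk[x_1,\dots,x_4]/(x_1^2,\dots,x_4^2,x_1x_3,x_2x_4)$ has Hilbert series $1+4t+4t^2$. The element $x_1-x_2+x_3-x_4$ is a nonzero kernel element of the square critical map, so $\avc(C_4)$ fails the WLP. This is exactly what Theorem~\ref{thm:cycle_classification} says, since $c_2(C_4)=2=\Cat_2$. The paper's proof asserts that $C_4$ has the WLP ``by'' that theorem, so it is inconsistent at this point. You should not hedge: the ``if'' direction of the statement is false.

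\textbf{Step~2 cannot be completed.} Step~2 is a genuine gap, and no construction will close it, because its target is false. The star $K_{1,2m-1}$, with partition $\lambda=(2m-1)$, is a Ferrers graph on $[2m]$ with $\tau=1<m$. Taking the centre to be $1$, we have $J(K_{1,2m-1})=(x_1,\,x_2\cdots x_{2m})$, so
$\avc(K_{1,2m-1})\cong\Bbbk[x_2,\dots,x_{2m}]/(x_2^2,\dots,x_{2m}^2,\,x_2\cdots x_{2m})$.
This algebra agrees with $B_{2m-1}$ in degrees $\leq 2m-2$ and is zero in degree $2m-1$. Hence every map $\cdot\ell$ is either an up-map of $B_{2m-1}$ or lands in $0$, and the WLP follows from Theorem~\ref{SLP_Stanley}.

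For $K_{1,3}$, which you list as a small case, the map $[\avc(K_{1,3})]_1\to[\avc(K_{1,3})]_2$ has matrix $\left(\begin{smallmatrix}1&1&0\\1&0&1\\0&1&1\end{smallmatrix}\right)$ with determinant $-2$. In your Step~2 set-up $t-1=0$, so the kernel element you want would be a nonzero constant killed by $\ell\neq 0$. Thus the ``only if'' direction also fails.

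\textbf{What the paper's proof misses.} The paper's proof never addresses $\tau(G)<m$. Its list of graphs for $m\leq 2$ omits $K_{1,3}$. For $m\geq 3$ it asserts that $G$ has two complementary minimum covers, which by Proposition~\ref{prop:complementary_characterization} happens only when $G$ has a perfect matching.

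\textbf{What you have actually proved.} Your Step~1, Example~\ref{exam:pathP4}, and your $C_4$ computation together prove the corrected statement: if $G$ is a Ferrers graph on $[2m]$ with $\tau(G)=m$ (equivalently, with a perfect matching), then $\avc(G)$ has the WLP if and only if $G\in\{K_2,P_4\}$.
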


\begin{proof}
    If $m\leq 2$, then $G$ can only be $K_2, P_4,$ or $C_4$. Then $A_c(G)$ has the WLP by Theorems~\ref{thm:path_classifition} and ~\ref{thm:cycle_classification}. Now we can assume that $m\geq 3$. By \cite[Lemma~5.6]{CEM}, $G$ has two complementary minimum vertex covers, and $c_m(G)\leq m+1\leq \frac{1}{m+1}\binom{2m}{m}$. Therefore Proposition~\ref{prop:complementary_failure} applies and hence $A_c(G)$ does not have the WLP, as desired.
\end{proof}

\section{Corona graphs and well-covered trees}\label{sec:corona}
\begin{definition}
	Let $H$ be a graph with vertex set $[m]$. The corona of $H$, denoted by
	$H\circ K_1$, is the graph obtained from $H$ by attaching one new pendant vertex
	to each vertex of $H$.
	
	More precisely, $H\circ K_1$ has vertex set $[2m]$ and edge set
	\[
	E(H\circ K_1)
	=
	E(H)\cup \bigl\{\{i,m+i\}:1\leq i\leq m\bigr\}.
	\]
	The vertices $m+1,\ldots,2m$ are pendant vertices, and the edges
	$\{i,m+i\}$ are called the pendant edges.
\end{definition}

The following is likely proved in \cite{Gutman1992}. We provide a proof for completion.

\begin{proposition}\label{prop:corona_covers}
	Let $H$ be a graph with vertex set $[m]$, and let $G=H\circ K_1$ be its corona,
	with vertex set $[2m]$ and pendant edges $\{i,m+i\}$ for $i=1,\ldots,m$.
	Then $\tau(G)=m$. Moreover, the minimum vertex covers of $G$ are precisely
	\[
	C_S=\{m+i:i\in S\}\cup\{i:i\notin S\},
	\]
	where $S$ runs through all independent sets of $H$.
\end{proposition}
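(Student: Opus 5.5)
The plan is to use the $m$ pendant edges $\{i,m+i\}$ as a perfect matching. This gives the lower bound on $\tau(G)$ directly. The minimum covers are then exactly the transversals of this matching that still cover $E(H)$.

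First, the pendant edges are pairwise disjoint, so any vertex cover of $G$ contains at least one endpoint of each and has size at least $m$. The set $[m]=C_\emptyset$ is a vertex cover of size $m$, since every edge of $G$ has an endpoint in $[m]$. Hence $\tau(G)=m$.

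Next, let $C$ be a vertex cover with $|C|=m$. Because $C$ meets each of the $m$ disjoint pendant edges, it contains exactly one endpoint of each. Set $S=\{i\in[m]: m+i\in C\}$. Then $C\cap\{i,m+i\}=\{m+i\}$ for $i\in S$, and $C\cap\{i,m+i\}=\{i\}$ for $i\notin S$, so $C=C_S$. It remains to check that $S$ is independent in $H$. If $\{i,j\}\in E(H)$ with $i,j\in S$, then $i,j\notin C$, so this edge of $G$ is uncovered, a contradiction.

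Conversely, let $S$ be an independent set of $H$. The set $C_S$ has size $m$ and contains one endpoint of each pendant edge. For an edge $\{i,j\}\in E(H)$, at most one of $i,j$ lies in $S$, so the other lies in $C_S$. Thus $C_S$ is a vertex cover of size $\tau(G)$, hence a minimum one. The map $S\mapsto C_S$ is clearly injective, which gives the bijection.

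There is no real obstacle here. The only point needing care is the exactly-one-endpoint argument, which is what forces every minimum cover to have the form $C_S$. As a consequence, $c_m(G)$ equals the number of independent sets of $H$, namely $I(H;1)$, which is presumably the quantity used later in the section.
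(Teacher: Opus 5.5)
Your proposal is correct and follows essentially the same argument as the paper. In both, the pendant edges form a perfect matching, which gives $\tau(G)=m$ with $[m]$ attaining the bound; a size-$m$ cover then contains exactly one endpoint of each pendant edge, so it equals $C_S$ with $S$ forced to be independent, and conversely each $C_S$ with $S$ independent covers $E(H)$.
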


\begin{proof}
	The pendant edges $\{i,m+i\}$, $i=1,\ldots,m$, form a matching of size $m$.
	Hence every vertex cover of $G$ has cardinality at least $m$. On the other hand,
	the set $[m]$ is a vertex cover of $G$ of cardinality $m$. Therefore
	$\tau(G)=m$.
	
	Let $C$ be a minimum vertex cover of $G$. Since $C$ has cardinality $m$ and must
	cover all pendant edges $\{i,m+i\}$, it contains exactly one endpoint from each
	pendant edge. Define $S=\{i\in [m]:m+i\in C\}$. Then
	\[
	C=\{m+i:i\in S\}\cup\{i:i\notin S\}.
	\]
	We claim that $S$ is independent in $H$. Indeed, if $S$ contained an edge
	$\{i,j\}\in E(H)$, then $m+i,m+j\in C$, and hence $i,j\notin C$. Thus the edge
	$\{i,j\}$ would not be covered by $C$, a contradiction. Hence $S$ is independent
	in $H$.
	
	Conversely, let $S$ be an independent set of $H$. Then $C_S$ contains exactly
	one endpoint of each pendant edge $\{i,m+i\}$, so it covers all pendant edges.
	Moreover, if $\{i,j\}\in E(H)$, then $S$ cannot contain both $i$ and $j$.
	Therefore at least one of $i,j$ does not belong to $S$, and hence at least one
	of $i,j$ belongs to $C_S$. Thus $C_S$ covers every edge of $H$. Consequently,
	$C_S$ is a vertex cover of $G$ of cardinality $m$, and hence it is a minimum
	vertex cover.
\end{proof}

\begin{proposition}\label{prop:corona_independence}
	Let $H$ be a graph with vertex set $[m]$, and let $G=H\circ K_1$ be its corona,
	with vertex set $[2m]$ and pendant edges $\{i,m+i\}$ for $i=1,\ldots,m$.
	Let
	\[
	I(H;t)=\sum_{d\geq 0}s_d(H)t^d
	\]
	be the independence polynomial of $H$. Then $s_m(G)=I(H;1)$, and
	\[
	\dim_{\Bbbk}[\avc(G)]_m-\dim_{\Bbbk}[\avc(G)]_{m-1}
	=
	\operatorname{Cat}_m-I(H;1).
	\]
\end{proposition}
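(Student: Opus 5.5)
The plan is to count the independent $m$-sets of $G$ by putting them in bijection with the independent sets of $H$, and then apply Corollary~\ref{cor:even_borderline_cat}. The starting point is the complementation between vertex covers and independent sets recorded after Proposition~\ref{prop:HilbertseriesofG}: a set $C\subseteq[2m]$ is a vertex cover of size $m$ exactly when $[2m]\setminus C$ is an independent set of size $m$. Hence
\[
s_m(G)=c_m(G).
\]

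Since $\tau(G)=m$ by Proposition~\ref{prop:corona_covers}, every vertex cover of size $m$ is a minimum vertex cover. By Proposition~\ref{prop:corona_covers}, these are exactly the sets $C_S$, where $S$ runs over the independent sets of $H$ (including $S=\emptyset$).

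The map $S\mapsto C_S$ is injective. Indeed, $S$ is recovered from $C_S$ as $\{i\in[m]: m+i\in C_S\}$. Therefore $c_m(G)$ equals the total number of independent sets of $H$, which is
\[
\sum_{d\ge 0}s_d(H)=I(H;1).
\]
This gives $s_m(G)=c_m(G)=I(H;1)$.

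For the dimension identity, apply Corollary~\ref{cor:even_borderline_cat}; its hypotheses, vertex set $[2m]$ and $\tau(G)=m$, hold by Proposition~\ref{prop:corona_covers}. It gives
\[
\dim[\avc(G)]_m-\dim[\avc(G)]_{m-1}=\Cat_m-c_m(G)=\Cat_m-I(H;1).
\]
One could also verify this directly from Proposition~\ref{prop:HilbertseriesofG}. Since $\tau(G)=m$, we have $c_{m-1}(G)=0$, and the difference becomes $\binom{2m}{m}-\binom{2m}{m-1}-c_m(G)$. The identity $\binom{2m}{m}-\binom{2m}{m-1}=\Cat_m$ then finishes it.

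There is no real obstacle here. The one point needing care is distinguishing $s_m(G)$, which counts independent sets of $G$, from $c_m(G)$, and recognizing that the complementation relation $c_d=s_{n-d}$ with $n=2m$, $d=m$ identifies them.
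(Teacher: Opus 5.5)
Your proof is correct and is essentially the paper's argument: both rest on a bijection between the independent sets $S$ of $H$ and the relevant $m$-subsets of $G$, and then read off the dimension difference from Proposition~\ref{prop:HilbertseriesofG} using $\tau(G)=m$. The only difference is cosmetic. You count on the cover side by reusing Proposition~\ref{prop:corona_covers} and then complementing via $c_m(G)=s_m(G)$, whereas the paper counts the independent $m$-sets $S\cup\{m+j: j\notin S\}$ of $G$ directly, and these are exactly the complements of your $C_S$.
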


\begin{proof}
	Let $U$ be an independent set of $G$, and put $S=U\cap [m]$. Since the induced
	subgraph of $G$ on $[m]$ is $H$, the set $S$ is independent in $H$.
	
	Conversely, suppose that $S$ is an independent set of $H$. Once $S$ is fixed,
	we cannot choose the pendant vertex $m+i$ for any $i\in S$, because
	$\{i,m+i\}$ is an edge of $G$. On the other hand, for every $j\notin S$, the
	pendant vertex $m+j$ may be chosen independently. Therefore the independent
	sets of $G$ whose intersection with $[m]$ is $S$ contribute
	$t^{|S|}(1+t)^{m-|S|}$ to the independence polynomial. 
	
	There is a bijection between independent
	sets of $H$ and independent sets of $G$ of cardinality $m$, and consequently
	\[
	s_m(G)=|\operatorname{Ind}(H)|=I(H;1),
	\]
	the number of all independent sets of $H$. 
	
	Finally, by Proposition~\ref{prop:HilbertseriesofG}, we have
	\[
	\dim_{\Bbbk}[\avc(G)]_m=\binom{2m}{m}-s_m(G)
	=
	\binom{2m}{m}-I(H;1),
	\]
	whereas
	\[
	\dim_{\Bbbk}[\avc(G)]_{m-1}=\binom{2m}{m-1},
	\]
	because $\tau(G)=m$ by Proposition~\ref{prop:corona_covers}. Therefore
	\[
	\dim_{\Bbbk}[\avc(G)]_m-\dim_{\Bbbk}[\avc(G)]_{m-1}
	=
	\binom{2m}{m}-\binom{2m}{m-1}-I(H;1).
	\]
	Since
	\[
	\binom{2m}{m}-\binom{2m}{m-1}
	=
	\operatorname{Cat}_m,
	\]
	we obtain
	\[
	\dim_{\Bbbk}[\avc(G)]_m-\dim_{\Bbbk}[\avc(G)]_{m-1}
	=
	\operatorname{Cat}_m-I(H;1).
	\]
\end{proof}

\begin{proposition}\label{prop:bipartite_corona_failure}
	Let $H$ be a bipartite graph with vertex set $[m]$, and let
	$G=H\circ K_1$ be its corona, with vertex set $[n]=[2m]$ and pendant edges
	$\{i,m+i\}$ for $i=1,\ldots,m$. Assume that $m\geq 3$ and
	$I(H;1)\leq \operatorname{Cat}_m$. Then $\avc(G)$ does not have the WLP.
\end{proposition}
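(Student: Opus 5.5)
The plan is to reduce to Proposition~\ref{prop:complementary_failure}, with Proposition~\ref{prop:complementary_characterization} supplying the complementary covers.

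First, we check that $G=H\circ K_1$ is bipartite and has a perfect matching. The pendant edges $\{i,m+i\}$, $1\le i\le m$, form a perfect matching of $G$. For bipartiteness, we fix a bipartition $[m]=X_H\sqcup Y_H$ of $H$ into independent sets. We then put $X=X_H\cup\{m+i: i\in Y_H\}$ and $Y=Y_H\cup\{m+i:i\in X_H\}$. Every edge of $H$ joins $X_H$ to $Y_H$, and every pendant edge $\{i,m+i\}$ joins the two sides by construction, since $m+i$ is placed on the side opposite to $i$. Hence $X$ and $Y$ are independent sets partitioning $[2m]$, and $G$ is bipartite. By Proposition~\ref{prop:complementary_characterization}, condition (2) holds, so (1) holds as well. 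Thus $G$ has two complementary minimum vertex covers of size $m$, and $\tau(G)=m$; this last fact also follows from Proposition~\ref{prop:corona_covers}.

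Next, we verify the counting hypothesis $c_m(G)\le\Cat_m$. Since $|V(G)|=2m$, a vertex cover of size $m$ is exactly the complement of an independent set of size $m$, so $c_m(G)=s_m(G)$. By Proposition~\ref{prop:corona_independence}, $s_m(G)=I(H;1)$. This agrees with Proposition~\ref{prop:corona_covers}, where the covers $C_S$ are indexed by the independent sets $S$ of $H$. The assumption $I(H;1)\le\Cat_m$ therefore gives $c_m(G)\le \Cat_m$, and Proposition~\ref{prop:complementary_failure} shows that $\avc(G)$ fails the WLP.

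There is no real obstacle. The only point needing care is the explicit bipartition of the corona, where each pendant vertex must be placed on the side opposite to its neighbour in $H$. The hypothesis $m\ge 3$ plays no role in this argument beyond matching the intended applications. It would matter only if one wanted to derive $I(H;1)\le \Cat_m$ automatically, which is not needed here.
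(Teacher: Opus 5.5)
Your proof is correct and is essentially the paper's argument: the paper builds the same complementary pair $O=X\cup\{m+i:i\in Y\}$, $E=Y\cup\{m+i:i\in X\}$ and invokes Proposition~\ref{prop:complementary_failure}, with Proposition~\ref{prop:corona_independence} supplying the count; the only difference is that the paper identifies these sets as $C_Y$ and $C_X$ via Proposition~\ref{prop:corona_covers}, whereas you obtain them through Proposition~\ref{prop:complementary_characterization}. Your explicit check $c_m(G)=s_m(G)=I(H;1)\le\Cat_m$ verifies the hypothesis of Proposition~\ref{prop:complementary_failure} directly, and you are right that the assumption $m\ge 3$ is never used.
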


\begin{proof}
	Since $H$ is bipartite, write $[m]=X\sqcup Y$ for a bipartition of $H$. Define
	\[
	O=X\cup \{m+i:i\in Y\},
	\qquad
	E=Y\cup \{m+i:i\in X\}.
	\]
	Then $O$ and $E$ are complementary subsets of $[2m]$. Moreover, $O$ and $E$ are
	minimum vertex covers of $G$. Indeed, by Proposition~\ref{prop:corona_covers},
	the minimum vertex covers of $G$ are precisely the sets
	\[
	C_S=\{m+i:i\in S\}\cup\{i:i\notin S\},
	\]
	where $S$ runs through all independent sets of $H$. Since $X$ and $Y$ are
	independent sets of $H$, we have
	\[
	E=C_X
	\qquad\text{and}\qquad
	O=C_Y.
	\]
	Thus $O$ and $E$ are complementary minimum vertex covers of $G$.
	
	By Proposition~\ref{prop:corona_independence}, we have
	\[
	\dim_{\Bbbk}[\avc(G)]_m-\dim_{\Bbbk}[\avc(G)]_{m-1}
	=
	\operatorname{Cat}_m-I(H;1).
	\]
	Hence the assumption $I(H;1)\leq \operatorname{Cat}_m$ implies
	$\dim_{\Bbbk}[\avc(G)]_{m-1}\leq \dim_{\Bbbk}[\avc(G)]_m$. Therefore maximal
	rank for the critical map
	$\ell:[\avc(G)]_{m-1}\to[\avc(G)]_m$ is equivalent to injectivity.
	
	Since $G$ has two complementary minimum vertex covers $O$ and $E$ of size $m$,
	Proposition~\ref{prop:complementary_failure} applies. It shows that the
	critical map is not injective. Hence $\avc(G)$ does not have the WLP.
\end{proof}

\begin{corollary}\label{cor:connected_bipartite_corona_failure}
	Let $H$ be a connected bipartite graph with vertex set $[m]$, where $m\geq 3$,
	and let $G=H\circ K_1$ be its corona, with vertex set $[n]=[2m]$. Then
	$\avc(G)$ does not have the WLP.
\end{corollary}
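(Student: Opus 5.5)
The plan is to reduce to Proposition~\ref{prop:bipartite_corona_failure}. Since $H$ is bipartite and $m\geq 3$, it suffices to verify
\[
I(H;1)\leq \operatorname{Cat}_m .
\]
Here $I(H;1)$ is the total number of independent sets of $H$. The work is therefore purely combinatorial: bound the number of independent sets of a connected graph on $m$ vertices.

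\textbf{Upper bound on independent sets.} For a connected graph on $m$ vertices, the number of independent sets is maximized by the star $K_{1,m-1}$. Its independent sets are either $\{\text{center}\}$ or an arbitrary subset of the leaves, so the star has exactly $2^{m-1}+1$ of them. This is a classical extremal result (Prodinger--Tichy for trees; it extends to connected graphs, since deleting edges only increases the number of independent sets, and every connected graph contains a spanning tree). To keep the argument self-contained, I would prove $I(T;1)\leq 2^{m-1}+1$ for trees $T$ by induction on $m$. Choose a leaf $v$ with neighbour $u$ and split according to whether $v$ is in the set:
\[
I(T;1)=I(T-v;1)+I(T-v-u;1).
\]
The first term is at most $2^{m-2}+1$ by induction. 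For the second, $T-v-u$ has $m-2$ vertices, so it has at most $2^{m-2}$ independent sets, and equality holds only when it is edgeless. Summing gives $2^{m-1}+1$. The spanning-tree reduction then transfers the bound to any connected $H$.

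\textbf{Comparison with the Catalan number.} It remains to show
\[
2^{m-1}+1\leq \operatorname{Cat}_m \quad\text{for } m\geq 3 .
\]
For $m=3$ this reads $5\leq 5$, for $m=4$ it reads $9\leq 14$, and for $m=5$ it reads $17\leq 42$. For the induction step, use
\[
\operatorname{Cat}_{m+1}=\frac{2(2m+1)}{m+2}\operatorname{Cat}_m ,
\]
whose ratio $\frac{2(2m+1)}{m+2}$ is at least $2$ for $m\geq 2$. The left side $2^{m-1}+1$ grows by a factor less than $2$, so the inequality propagates. Alternatively, the inequality holds because $\operatorname{Cat}_m$ grows like $4^m$.

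The main obstacle is making the extremal bound rigorous, specifically the inductive step for trees. One must check that $T-v$ is still a tree, so the induction hypothesis applies, and that $T-v-u$ needs no connectivity assumption, since only the trivial bound $2^{m-2}$ is used there. The case $m=3$ is tight, with $H=P_3$ giving $I(H;1)=5=\operatorname{Cat}_3$. This causes no trouble, because Proposition~\ref{prop:bipartite_corona_failure} only requires a non-strict inequality.
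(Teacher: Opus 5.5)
Your proof is correct and has the same skeleton as the paper's: reduce to Proposition~\ref{prop:bipartite_corona_failure}, then prove $I(H;1)\le \operatorname{Cat}_m$ by bounding the number of independent sets and using the Catalan ratio $\frac{2(2m+1)}{m+2}\ge 2$ to propagate the inequality. The difference is the counting bound.

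The paper only uses a single vertex $v$ of positive degree. Sets avoiding $v$ number at most $2^{m-1}$, and sets containing $v$ at most $2^{m-2}$, so $I(H;1)\le 3\cdot 2^{m-2}$. That takes two lines, but at $m=3$ it gives $6>5=\operatorname{Cat}_3$, which is too weak. So the paper treats $m=3$ separately, noting that $H\cong P_3$ and $I(P_3;1)=5$, and starts the Catalan induction at $\operatorname{Cat}_4=14>12$.

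You instead prove the sharp extremal bound $I(H;1)\le 2^{m-1}+1$, attained by the star, via a spanning-tree reduction and a leaf induction on trees (base case $m\le 2$ is immediate). This costs more work but handles $m=3$ uniformly and shows exactly why that case is tight. Both arguments are complete; yours gives the optimal estimate, while the paper's is shorter.
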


\begin{proof}
	By Proposition~\ref{prop:bipartite_corona_failure}, it is enough to prove that
	$I(H;1)\leq \operatorname{Cat}_m$.
	
	If $m=3$, then $H\cong P_3$, since $H$ is connected and bipartite. Hence
	$I(H;1)=5=\operatorname{Cat}_3$.
	
	Now assume that $m\geq 4$. Since $H$ is connected, it has a vertex $v$ of
	positive degree. Every independent set of $H$ either avoids $v$ or contains
	$v$. There are at most $2^{m-1}$ subsets of $[m]$ avoiding $v$. If an
	independent set contains $v$, then it contains no neighbor of $v$. Since $v$
	has at least one neighbor, there are at most $2^{m-2}$ such independent sets.
	Therefore
	\[
	I(H;1)\leq 2^{m-1}+2^{m-2}=3\cdot 2^{m-2}.
	\]
	On the other hand, $\operatorname{Cat}_4=14>12=3\cdot 2^2$, and
	\[
	\frac{\operatorname{Cat}_{r+1}}{\operatorname{Cat}_r}
	=
	\frac{2(2r+1)}{r+2}>2
	\]
	for every $r\geq 4$. It follows by induction that
	$\operatorname{Cat}_m>3\cdot 2^{m-2}$ for every $m\geq 4$. Hence
	$I(H;1)<\operatorname{Cat}_m$ for every $m\geq 4$.
	
	Thus $I(H;1)\leq \operatorname{Cat}_m$ for all $m\geq 3$, and
	Proposition~\ref{prop:bipartite_corona_failure} applies. Therefore $\avc(G)$
	does not have the WLP.
\end{proof}

\begin{corollary}\label{cor:well_covered_trees}
	Let $T$ be a well-covered tree with at least two vertices. Then $\avc(T)$ has the WLP if and only if $T\cong K_2$ or $T\cong P_4$.
\end{corollary}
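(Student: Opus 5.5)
The plan is to use the classical structure theorem for well-covered trees (Ravindra; Finbow–Hartnell–Nowakowski): a tree $T$ with at least two vertices is well-covered if and only if $T\cong K_1\circ K_1=K_2$ or $T=H\circ K_1$ for some tree $H$ with $|V(H)|\geq 2$. Equivalently, every vertex of $T$ is either a leaf or adjacent to exactly one leaf. This reduces the statement to coronas of trees, which Corollary~\ref{cor:connected_bipartite_corona_failure} already handles once $|V(H)|\ge 3$.

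\emph{Small cases.} Write $T=H\circ K_1$ with $H$ a tree on $m$ vertices.
\begin{itemize}
\item If $m=1$, then $T\cong K_2$, and $\avc(K_2)\cong \Bbbk$ trivially has the WLP (this case is also covered by Theorem~\ref{thm:path_classifition} with $n=2$).
\item If $m=2$, then $H\cong K_2$ and $T\cong P_4$, which has the WLP by Example~\ref{exam:pathP4}.
\end{itemize}
This gives the ``if'' direction. Note that $K_2$ and $P_4$ are indeed well-covered, so both sides of the equivalence are consistent.

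\emph{Large case.} If $m\geq 3$, then $H$ is a tree, hence connected and bipartite, on $m\geq 3$ vertices. Corollary~\ref{cor:connected_bipartite_corona_failure} then shows that $\avc(T)$ fails the WLP. Moreover, $T$ is not isomorphic to $K_2$ or $P_4$, since it has $2m\geq 6$ vertices. This gives the ``only if'' direction.

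The main point requiring care is invoking the characterization correctly. Well-covered trees on at least two vertices are exactly $K_2$ and the coronas $H\circ K_1$ of trees $H$; in particular, no well-covered tree has an odd number of vertices other than $K_1$, which is excluded by hypothesis. If one prefers a self-contained argument, the characterization can be proved directly:
\begin{itemize}
\item First, each non-leaf vertex is adjacent to at most one leaf. Otherwise, a maximal independent set containing that vertex would be strictly smaller than one obtained by swapping the vertex for its two leaf neighbours.
\item Second, each non-leaf vertex is adjacent to at least one leaf. This follows from the standard pendant-edge argument in the literature.
\end{itemize}
I would cite the classical result rather than reprove it.
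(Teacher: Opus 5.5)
Your proposal is correct and follows essentially the same route as the paper: use the structure theorem for well-covered trees to write $T=H\circ K_1$ with $H$ a tree, treat $|V(H)|=1,2$ directly (giving $K_2$ and $P_4$), and invoke Corollary~\ref{cor:connected_bipartite_corona_failure} when $|V(H)|\geq 3$. Your remark that $T$ then has $2m\geq 6$ vertices, so it cannot be $K_2$ or $P_4$, is a small point that the paper leaves implicit.
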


\begin{proof}
By the standard characterization of well-covered trees
\cite{LM99}, every well-covered tree with at least two vertices is
of the form $H\circ K_1$ for some tree $H$. We distinguish three cases.

If $|V(H)|=1$, then $T\cong K_2$. In this case
$\avc(T)\cong \Bbbk$, and hence $\avc(T)$ has the WLP.

If $|V(H)|=2$, then $H\cong K_2$ and therefore $T\cong K_2\circ K_1\cong P_4$.
Thus $\avc(T)$ has the WLP by Theorem~\ref{thm:path_classifition}.

Finally, assume that $|V(H)|\geq 3$. Since $H$ is a tree, it is connected and
bipartite. Hence Corollary~\ref{cor:connected_bipartite_corona_failure}
applies to $T=H\circ K_1$, and so $\avc(T)$ does not have the WLP.
\end{proof}

\end{document}